\providecommand{\U}[1]{\protect\rule{.1in}{.1in}}
\newtheorem{theorem}{Theorem}
\newtheorem{corollary}[theorem]{Corollary}
\newtheorem{definition}[theorem]{Definition}
\newtheorem{example}[theorem]{Example}
\newtheorem{examples}[theorem]{Examples}
\newtheorem{proposition}[theorem]{Proposition}
\newtheorem{remark}[theorem]{Remark}
\newenvironment{Exmps}{\begin{examples}\em}{\end{examples}}
\newtheorem{prf}{Proof}
\begin{document}

\title{`Ultrafilters Relaxed' -- Species of Families of Subsets}
\author{Eliahu Levy}
\address{Eliahu Levy, Department of Mathematics The Technion, Israel Institute of
Technology, Technion City, Haifa 32000, Israel.}
\email{eliahu@math.technion.ac.il.}

%\thanks{} %\keywords{} %\subjclass{}

\date{}

\keywords{families of subsets, ultrafilters and `relaxing them', logical connectives, filters, eventual families,  limits with respect to them, inner and outer (finitely-additive) `measures', multi-sets and multi-families, common fixed-points, Hausdorff topological spaces}

\begin{abstract}
An assortments of `species' of families of subsets of a set and some of their properties are investigated, with an eye on the logic and limit role they may play as `relaxed parallels' to ultrafilters.
\end{abstract}

\maketitle

\tableofcontents

\section{Introduction, Points vs.\ Families\label{s:PvF}}
Let $X$ be a set. For points $x\in X$ and subsets $S\subset X$, there is the \textit{truth-value} whether $X\in S$ or not.

But with \textit{families of subsets} $\mathcal{F}\subset\mathcal{P}(X)$, where $\mathcal{P}(x)$ is the set of subsets of $X$, there is also a truth-value -- whether $S\in\mathcal{F}$ or not.

Indeed, for every $x\in X$, the family
$$\mathcal{U}_x:=\{S\subset X\,|\,x\in S\}$$
`does the same as $x$' for that matter: by definition, the truth-value whether $x\in S$ is the same as the truth-value whether $S\in\mathcal{U}_x$.

In this sense, judging by these truth-values, the map (obviously an injection) $x\mapsto\mathcal{U}_x$ \textit{embeds $X$ into $\mathcal{P}(X)$}, making $\mathcal{P}(X)$ a kind of extension of $X$ -- `psudo-points' defined by to which subsets they belong, in a way other than all $x\in X$.

Indeed, $\mathcal{U}_x$ in an instance of an \textit{ultrafilter} in $X$. and using the Axiom of Choice one proves that in an infinite set there are many other ultrafilters.

To see what we aim at, note that any subset of $X$ can be viewed as a \textit{property of elements of $X$}, namely the property for $x$ to be in $S$, thus for $S$ to be the set of elements satisfying the property.

Logical connectives among properties will make operations on the sets:

\medskip

\textbf{I}. \textbf{NOT} -- giving the \textit{complement} $S^c$, defined as $S^c:=\{x\in X\,|\,x\notin S\}$. Indeed, to say that $x\in S^c$ is to say that NOT $x\in S$

\textbf{II}. \textbf{AND} -- giving the \textit{intersection} $S\cap T$ of two subsets. Indeed, to say that $x\in S$ AND $x\in T$ is to say that $x\in S\cap T$

\textbf{III}. \textbf{OR} -- giving the \textit{union} $S\cup T$. To say that $x\in S$ OR $x\in T$ is to say that $x\in S\cup T$.

\textbf{IV} \textbf{IMPLICATION} -- giving \textit{inclusion} of subsets. Indeed, to say that $x\in S$ always implies $x\in T$,
is to say that $S\subset T$.

\textbf{V} \textbf{TRUE} -- giving $S$ as the whole $X$ for which $x\in X$ is always true.

\textbf{VI} \textbf{FALSE} -- giving $S$ as the empty set $\emptyset$ for which $x\in\emptyset$ is always false.

\medskip

But given a family of subsets $\mathcal{F}\subset\mathcal{P}(S)$. \textit{Can we plug $\mathcal{F}$ instead of $x$}
in \textbf{I - VI}, i.e.,

\medskip

\textbf{I} Does $S^c\in\mathcal{F}$ hold if and only if \textbf{NOT} $S\in\mathcal{F}$?

\textbf{II} Does $S\cap T\in\mathcal{F}$ hold if and only if $S\in\mathcal{F}$ \textbf{AND} $S\in\mathcal{F}$?

\textbf{III} Does $S\cup T\in\mathcal{F}$ hold if and only if $S\in\mathcal{F}$ \textbf{OR} $S\in\mathcal{F}$?
$S\in\mathcal{F}$?

\textbf{IV} Does $S\subset T$ make $S\in\mathcal{F}$ always imply $T\in\mathcal{F}$?

\textbf{V} Are we guarantied that $X$ belongs to $\mathcal{F}$?

\textbf{VI} Are we guarantied that $\emptyset$ does not belong to $\mathcal{F}$?

\medskip

Certainly these may not hold. Clearly they hold for the $\mathcal{U}_x$.

An \textbf{ultrafilter} is characterized by \textit{all \textbf{I - IV} holding}.

This means that for ultrafilters first-order logical operations carry over.

And since in an infinite set we will have them besides the $\mathcal{U}_x$, one applies that to make, say, from the natural numbers $\mathbb{N}$ non-standard models of its first-order theory, i.e.\ genuine extensions of $\mathbb{N}$ where first-order statements hold if and only if they hold in $\mathbb{N}$ (alas, the induction axiom, inasmuch as it refers to `any subset of $\mathbb{N}$' is not first-order -- that's much of the point).

But suppose we relax our requirements, assume that for $\mathcal{F}$ \textit{only some of \textbf{I -IV} hold}. In this note we try to remark on such families.

For instance, suppose only \textbf{IV} holds: if $S\subset T$ and $S\in\mathcal{F}$ then $T\in\mathcal{F}$. Then $\mathcal{F}$ is called an \textbf{eventual family} (see \cite{Censor-Levy} and \S\ref{sbs:Ev}).

Note that for eventual $\mathcal{F}$, the \textbf{only if} in \textbf{II} above and the \textbf{if} in \textbf{III} are automatic. \textbf{II} just says that $\mathcal{F}$ respects intersections -- if $S$ and $T$ belong to $\mathcal{F}$ also $S\cap T$ belongs -- an eventual family satisfying that is called a \textbf{filter}.

Suppose only \textbf{I} holds: $S^c\in\mathcal{F}$ if and only if $S\notin\mathcal{F}$. These will be referred to as \textbf{self-Aso} in \S\ref{s:Ev-Sa-Inn-Out}. That, of course, holds for ultrafilters, but not only for them. To take an example: let $X$ be finite with $2n+1$ elements and let $\mathcal{F}$ be the family of sets with $>n$ elements. This example is also eventual.

Recall that one easily proves that an ultrafilter in finite $X$ must be `fixed' -- an $\mathcal{U}_x$.

\subsection{Limits}

Families of subsets $\mathcal{F}$ just give to any $S\subset X$ a truth value, whether $S\in\mathcal{F}$ or not.

To speak `fancily': they extract from any $\{0,1\}$-valued function on $X$ a value in $\{0,1\}$.

The merit of that way of speaking is that, with respect to some $\mathcal{F}$, one often can do similar things with some space $Y$ instead of $\{0,1\}$: extract a \textit{limit} in $Y$ from $Y$-valued functions on $X$.

Let's see how that goes with $\mathcal{F}$ an \textit{ultrafilter}.

Let $Y$ be a bounded interval in $\mathbb{R}$: we want $\mathcal{F}$ to extract a limit from a bounded real-value function $f$ on $X$.

Now, partition $Y$ into a finite number of disjoint small intervals $Y_1,Y_2,\ldots,Y_k$.

Then $f^{-1}(Y_1),f^{-1}(Y_2),\ldots,f^{-1}(Y_k)$ will be a disjoint partition of $X$. Now, $\mathcal{F}$
cannot contain two of these, because then it would also contain their intersection -- empty. It cannot contain none of them, then it will not contain their union -- the whole $X$. Therefore it contains exactly one of them. So, modulo the ultrafilter $\mathcal{F}$, i.e.\ outside some subset not in the ultrafilter, $f$ takes values in some numerical set of
as small diameter as we wish.

Things that hold modulo $\mathcal{F}$ cannot be in contradiction, indeed any finite conjunction of them also hold modulo $\mathcal{F}$ since as an ultrafilter it contains the intersection of any two of its members.

So one concludes that for any such bounded real valued function $f$ on $X$, and an ultrafilter $\mathcal{F}$ there is a unique `limit' $a\in\mathbb{R}$ characterized by: for any neighborhood $U$ of $x$ in the reals, modulo the ultrafilter $f$ takes values in $U$.

And one easily sees that this limit depends linearly on $f$, nay, for any continuous real function $\phi$ of $n$ variables, the limit of $\phi$ applied to $n$ functions equals $\phi$ applied to their limits!

But these notions can be applied to much more general families $\mathcal{F}$.

Suppose we retain the definition of a limit point of a $Y$-valued function on $X$, $Y$ a Haudorff topological space, as any point $y\in Y$ such that

\medskip

For any neighborhood $U$ of $y$ in $Y$, any set containing $\{x\in X\,|\,f(x)\in U\}$ belongs to $\mathcal{F}$.

Thus that limit notion will be the same for $\mathcal{F}$ and for \textit{the eventual core of $\mathcal{F}$}, defined as the maximal eventual family contained in $\mathcal{F}$, given by
       $$\mathcal{F}':=\{S\subset X\,|\,\text{any\,}\,S'\supset S\,\text{ is in F.}\}$$
So we here assume from the start \textit{$\mathcal{F}$ eventual}.
\medskip

Then we have
\begin{itemize}
\item For $\mathcal{F}$ an \textit{ultrafilter} and $Y$ a bounded real interval, we saw that a unique limit exists always, whatever the function $f:X\to Y$. That holds for any compact $Y$.
\item Clearly, with $\mathcal{F}$ a filter a limit is always unique (yet need not exist).
\end{itemize}

\subsection{Focusing on Eventual Families}\label{sbs:Ev}
This section includes some points mentioned in \cite{Censor-Levy}, recapitulated here
for the sake of completeness.

Above the notion of eventual families of subsets was introduced, namely,

\begin{definition}
\label{def:event-co-event}Let $X$ be a set and let $\mathcal{F}$ be a family
of subsets of $X.$ The family $\mathcal{F}$ is called an \textbf{eventual family} if it is \textit{upper hereditary
with respect to inclusion}, i.e.,\ if%
\begin{equation}
S\in\mathcal{F},\,S^{\prime}\supseteq S\,\Rightarrow S^{\prime}\in\mathcal{F}.
\end{equation}
The family $\mathcal{F}$ is called a \textbf{co-eventual family} if it is \textit{lower hereditary with respect to
inclusion}, i.e.,\ if%
\begin{equation}
S\in\mathcal{F},\,S^{\prime}\subseteq S\Rightarrow S^{\prime}\in\mathcal{F}.
\end{equation}

\end{definition}

We mention in passing that Borg \cite{Borg-hereditary} uses the term
`hereditary family', in his work in the area of combinatorics, for exactly what we call here
`co-eventual family'.

Several simple observations regarding such families
can be made.

\begin{proposition}
\label{lem:simple-observ}(i) A family $\mathcal{F}$ of subsets of $X$ is
co-eventual iff its complement, i.e., the family of subsets of $X$ which are
not in $\mathcal{F}$, is eventual.

(ii) The empty family and the family of all subsets of $X$ are each both
eventual and co-eventual, and they are the only families with this property.
\end{proposition}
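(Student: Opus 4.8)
The plan is to prove both parts directly from the definitions, since the statement is elementary and the only genuine content is an unwinding of quantifiers.

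For part (i), I would fix a family $\mathcal{F}$ and write $\mathcal{F}^{c}$ for the family of subsets of $X$ not in $\mathcal{F}$. I would prove the two implications of the ``iff'' separately, but really they are contrapositives of one another. Assume $\mathcal{F}$ is co-eventual, and take $S\in\mathcal{F}^{c}$ and $S'\supseteq S$; I must show $S'\in\mathcal{F}^{c}$. If not, then $S'\in\mathcal{F}$, and since $S\subseteq S'$ co-eventuality of $\mathcal{F}$ forces $S\in\mathcal{F}$, contradicting $S\in\mathcal{F}^{c}$. Hence $\mathcal{F}^{c}$ is eventual. Conversely, if $\mathcal{F}^{c}$ is eventual, apply the same argument with the roles swapped (using $(\mathcal{F}^{c})^{c}=\mathcal{F}$) to conclude $\mathcal{F}$ is co-eventual. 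This is the whole argument; there is no obstacle.

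For part (ii), first observe that the empty family $\emptyset$ and the full family $\mathcal{P}(X)$ are each eventual and co-eventual: both defining implications $S\in\mathcal{F},\,S'\supseteq S\Rightarrow S'\in\mathcal{F}$ and $S\in\mathcal{F},\,S'\subseteq S\Rightarrow S'\in\mathcal{F}$ have vacuously-true or trivially-true hypothesis/conclusion in these two cases (for $\emptyset$ there is no $S\in\mathcal{F}$; for $\mathcal{P}(X)$ the conclusion $S'\in\mathcal{F}$ always holds). For the uniqueness, suppose $\mathcal{F}$ is both eventual and co-eventual and $\mathcal{F}\neq\emptyset$; pick any $S\in\mathcal{F}$. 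By co-eventuality, $\emptyset\subseteq S$ gives $\emptyset\in\mathcal{F}$, and then by eventuality, every $S'\supseteq\emptyset$ — that is, every subset of $X$ — lies in $\mathcal{F}$, so $\mathcal{F}=\mathcal{P}(X)$. Thus the only two such families are $\emptyset$ and $\mathcal{P}(X)$.

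The ``main obstacle'' is essentially cosmetic: being careful that the empty family is not accidentally excluded (the argument in (ii) must branch on whether $\mathcal{F}$ is empty before choosing an element $S\in\mathcal{F}$), and noting the degenerate case $X=\emptyset$ — where $\mathcal{P}(X)=\{\emptyset\}$ still has exactly the two families $\emptyset$ and $\{\emptyset\}$, consistent with the statement. Both points are handled by the case split above, so no real difficulty arises.
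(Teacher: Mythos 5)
Your proof is correct and follows essentially the same route as the paper: part (i) is a direct unwinding of the definitions (by contraposition), and your uniqueness argument in (ii) — descend to $\emptyset$ by co-eventuality, then ascend to every subset by eventuality — is a mirror image of the paper's ``alternative'' argument (which goes up to $S\cup S'$ and then down to $S'$). No gaps; your handling of the empty family and the case $X=\emptyset$ is careful and consistent with the statement.
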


\begin{proof}
(i) This follows from the definitions. (ii) That the empty family and the
family of all subsets of $X$ are each both eventual and co-eventual is
trivially true. We show that if $\mathcal{F}$ is eventual and co-eventual and
is nonempty then it must contain all subsets of $X.$ Let $S\in\mathcal{F}$ and
distinguish between two cases. If $S=\emptyset$ then\thinspace\thinspace
$\mathcal{F}$ must contain all subsets of $X$ because $\mathcal{F}$ is
eventual. If $S\neq\emptyset$ let $x\in S$, then, since $\mathcal{F}$ is
co-eventual it must contain the singleton $\{x\}$. Consequently, the set
$\{x,y\},$ for any $y,$ is also in $\mathcal{F}$ and so $\{y\}\in\mathcal{F}$,
thus, all subsets of $X$ are contained in $\mathcal{F}$. Alternatively, if we
look at $S\in\mathcal{F}$, then for any subset $S^{\prime}$ of $X$,
$\mathcal{F}$ contains $S\cup S^{\prime}$ since $\mathcal{F}$ is eventual.
Then since $\mathcal{F}$ is co-eventual, it must contain $S^{\prime}$, leading
to the conclusion that it contains all subsets.
\end{proof}

\begin{remark}
\label{rem:filter}An eventual family $\mathcal{F}$ need not contain the
intersection of two of its members. If it does so for every two of its members
then it is a \textit{filter}.
\end{remark}

Similarly to the notion used in \cite{Danzig-Folkman-Shapiro} and
\cite{Lent-Censor} in the finite-dimensional space setting, we make here the
next definition.

\begin{definition}
\label{def:star-set}Given a family $\mathcal{F}$ of subsets of a set $X$, the
\textbf{star set associated with} $\mathcal{F}$, denoted by\thinspace\thinspace$\text{Star}%
(\mathcal{F}),$ is the subset of $X$ that consists of \textit{all\,}$x\in X$
\textit{such that the singletons\,}$\{x\}\in\mathcal{F}$, namely,%
\begin{equation}
\text{Star}(\mathcal{F}):=\{x\in X\mid\{x\}\in\mathcal{F}\}.
\end{equation}
\end{definition}

Suppose now that $X$ is a \textit{Hausdorff\,}Topological space.

\begin{definition}
\label{def:limit-pt-and-set}Let $\mathcal{F}$ be an eventual family of subsets
of $X$. A point $x\in X$ is called an \textbf{accumulation (or limit) point} of $\mathcal{F}$ if every (open)
neighborhood%
\footnote{Since, by definition, a neighborhood always contains an
\textit{open} neighborhood, considering all neighborhoods or just the open
ones does not make a difference here.}
of $x$ belongs to $\mathcal{F}$. The set of all accumulation points of $\mathcal{F}$ is called the
\textbf{limit set} of $\mathcal{F}$.
\end{definition}

\begin{proposition}
\label{prop:lim-set-closed}The limit set of an eventual family $\mathcal{F}$
is always closed.
\end{proposition}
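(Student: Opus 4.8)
The plan is to show that the complement of the limit set is open, by exhibiting, for each point $x$ not in the limit set, an open neighborhood of $x$ consisting entirely of non-accumulation points. So suppose $x$ is not an accumulation point of $\mathcal{F}$. By Definition~\ref{def:limit-pt-and-set}, this means there is some open neighborhood $U$ of $x$ with $U\notin\mathcal{F}$. I claim every point $y\in U$ also fails to be an accumulation point: indeed, $U$ is itself an open neighborhood of each of its points, and $U\notin\mathcal{F}$, so the defining condition for $y$ to be an accumulation point is violated by the witness $U$. Hence $U$ is an open set contained in the complement of the limit set and containing $x$.

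Since $x$ was an arbitrary point outside the limit set, the complement of the limit set is a union of open sets, hence open; therefore the limit set is closed. Note that here the Hausdorff hypothesis on $X$ is not actually needed for this particular statement — only the topology is used — but we keep it in force as it is the standing assumption of the subsection.

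The argument uses only the definition of accumulation point and the elementary fact that an open set is a neighborhood of each of its points; there is essentially no obstacle. The one mild subtlety worth flagging is the degenerate case in which $\mathcal{F}$ contains \emph{no} open neighborhood of any point, so that the limit set is empty; this is covered automatically, since the empty set is closed, and indeed the proof above then simply produces, for every $x\in X$, a witnessing open neighborhood not in $\mathcal{F}$. Similarly, if $\mathcal{F}=\mathcal{P}(X)$ the limit set is all of $X$, again closed, and the proof vacuously has no points to consider.
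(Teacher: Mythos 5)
Your proof is correct and follows essentially the same route as the paper's: both arguments observe that a point fails to be an accumulation point exactly when it lies in some open set not belonging to $\mathcal{F}$, so the complement of the limit set is a union of open sets and hence open. Your added remarks (that the Hausdorff hypothesis is not needed and that the degenerate cases are covered automatically) are accurate but inessential.
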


\begin{proof}
We show that the complement of the limit set, i.e., the set of all
non-accumulation points, is open. The point $y$ is a non-accumulation point
iff it has an open neighborhood which does not belong to $\mathcal{F}$,
i.e.,\ when it is a member of some open set not in $\mathcal{F}$. Hence the
complement of the limit set is the union of all open sets not in $\mathcal{F}%
$, and by definition, in a topological space, the union of any family of open
sets is open.
\end{proof}

We turn our attention now to sequences in $X$, i.e.,\ maps $\mathbb{N}%
\rightarrow X,$ where $\mathbb{N}$ denotes the integers.

\begin{definition}
\label{def:push}Given are a family $\mathcal{F}$ of subsets of $X$\ and a
mapping between sets $f:X\rightarrow Y$. The family of subsets
of $Y$ whose inverse image sets $f^{-1}(S)$ belong to $\mathcal{F}$ will be
denoted by $\text{Push}(f,\mathcal{F)}$ and called the
\textbf{push} of $\mathcal{F}$ by $f$, namely,
\begin{equation}
\text{Push}(f,\mathcal{F)}:=\{S\subseteq Y\,\mid f^{-1}(S)\in\mathcal{F}\}.
\end{equation}
\end{definition}

Combining Definitions \ref{def:limit-pt-and-set} and \ref{def:push} the
following remark is obtained.

\begin{remark}
\label{Blaim:push}Let $\mathcal{E}$ be an eventual family of subsets
of $\mathbb{N}$ and let $f:\mathbb{N}\rightarrow X$ be defined by
some given sequence $(x_n)_{n\in\mathbb{N}}$ in $X$. The accumulation points
and the limit set of $(x_n)_{n\in\mathbb{N}}$ with respect to $\mathcal{E}$
are those defined with respect to the push of $\mathcal{E}$ by $f$ .
\end{remark}

The next examples emerge by using two different eventual families in
$\mathbb{N}$. The same `machinery' yields both `cases' via changing the
eventual family $\mathcal{E}$ in $\mathbb{N}$.

\begin{Exmps}
\label{exmps}

\begin{enumerate}
\item Take as $\mathcal{E}$ the family $\mathcal{H}$ of all subsets of $\mathbb{N}$
\textit{with finite complement}.

Then accumulation points/limits with respect to $\mathcal{H}$ are the usual limits,
and if there is a limit point then it is unique. This is the case, as one clearly sees,
in a Hausdorff space $X$ whenever $\mathcal{E}$ is a filter, as here $\mathcal{E}$ clearly is.

\item Now take as $\mathcal{E}$ the family $\mathcal{G}$ of all \textit{infinite} subsets of
$\mathbb{N}$.

Then being an accumulation point means \textit{being some accumulation point
of the sequence} in the usual sense, which in general, need not be unique.
Indeed, here $\mathcal{E}$ is not a filter.
\end{enumerate}
\end{Exmps}

When considering eventual families in $\mathbb{N}$ it is often desirable to
assume that they are \textit{finitely-insensitive}, as we define next. All our
examples have this property.

\begin{definition}
\label{def:finite-intensive}A family $\mathcal{E}$ of subsets of $\mathbb{N}$
is called a \textbf{finitely-insensitive family} if for any $S\in\mathcal{E}$, finitely changing $S$, which means here adding and/or deleting a finite number of its members, will result in a set $S^{\prime}\in\mathcal{E}$.
\end{definition}

\begin{definition}
\label{def:distill}Let $X$ be a Hausdorff topological space and let
$\mathcal{F}$ be an eventual family in $X$. The \textbf{closure} of an eventual family $\mathcal{F}$ in $X$,
denoted by $\text{cl\,}\mathcal{F}$, consists of all subsets
$S\subseteq X$ such that \textit{all the open subsets $U\subseteq X$
which contain $S$ belong to $\mathcal{F}$.}
\end{definition}

Clearly, $\mathcal{F}$ is always a subfamily of $\text{cl\,}\mathcal{F}$, and
the set of limit points of an eventual family $\mathcal{F}$, in a Hausdorff
topological space $X,$ is just $\text{Star\,}(\text{cl\,}\mathcal{F}),$ given
in Definition \ref{def:star-set}.

\section{Multi-sets and Multi-families\label{s:mult-fam}}
Here too some points mentioned in \cite{Censor-Levy} are recapitulated.

A \textbf{multi-set} (sometimes termed \textbf{bag}, or \textbf{mset}) is a
modification of the concept of a set that allows for multiple instances for
each of its elements. The number of instances given for each element is called
the multiplicity of that element in the multi-set. The multiplicities of
elements are any number in $\{0,1,\ldots,\infty\}$, see the corner-stone
review of Blizard \cite{Blizard-multiset-1989}.

Yet in contrast to \cite{Blizard-multiset-1989}, we shall not pursue the distinction, from our point
of view of purely philosophical motivation, between a multi-set and its `representing function' -- the $\{0,1,\ldots,\infty\}$-valued function giving the multiplicities. Thus,

\begin{definition}
(i) A \textbf{multi-set} $M$ in a set $X$ is a $\{0,1,\ldots,\infty\}$- valued function
$M$ on $X$, its value $M(x)$ at some $x\in X$ considered as the multiplicity of $x$ in $M$.
In particular $M(x)=0$ -- the multiplicity being $0$, means `$x$ not belonging'.

A subset $S\subseteq X$ is identified with a multi-set
which is the \textit{characteristical function} or \textit{indicator function} of $S,$ i.e.,%
\begin{equation}
\iota_{S}(x):=\left\{
\begin{array}
[c]{cc}%
1, & \text{if\,}x\in S,\\
0, & \text{if\,}x\notin S
\end{array}
\right.
\end{equation}

(ii) A \textbf{multi-family} $\mathcal{M}$ on a set $X$ is a multi-set in the
powerset $2^{X}$ of $X$ (i.e., all the subsets of $X$). For such subset $S$
$M(S)$ is the multiplicity of $S$ in $\mathcal{M}$.
A family $\mathcal{F}$ of subsets of $X$
is thus identified with the, here $\{0,1\}$-valued, multi-family on $X$ $\iota_{\mathcal{F}}$, the
\textit{characteristical function} or \textit{indicator function} of
$\mathcal{F}$.

\begin{equation}
\iota_{\mathcal{F}}(S):=\left\{
\begin{array}
[c]{cc}%
1, & \text{if\,}S\in\mathcal{F},\\
0, & \text{if\,}S\notin\mathcal{F}.
\end{array}
\right.
\end{equation}

(iii) A multi-family $\mathcal{M}$ on a set $X$ is called \textbf{increasing} if%
\begin{equation}
S,S^{\prime}\subseteq X,\,S\subseteq S^{\prime}\Rightarrow
\mathcal{M}(S)\leq\mathcal{M}(S^{\prime}),
\end{equation}
and called \textbf{decreasing} if%
\begin{equation}
S,S^{\prime}\subseteq X,\,S\subseteq S^{\prime}\Rightarrow\mathcal{M}(S)\geq\mathcal{M}(S^{\prime}).
\end{equation}
\end{definition}

Clearly, a family of subsets of $X$ is an eventual (resp.\ co-eventual) family if and only if the
multi-family that defines it is increasing (resp.\ decreasing).

The next example shows why these notions may be useful.

\begin{example}\label{ex:Gap}
Considering the set $\mathbb{N}$, for a, finite or infinite,
subset $S\subseteq\mathbb{N}$ write $S$ as%
\begin{equation}
S=\{n_1^S,n_2^S,\ldots\},
\end{equation}
where $n_{\ell}^S\in\mathbb{N}$ for all $\ell,$ and the sequence $(n_{\ell
}^S)_{\ell=1}^{L}$ (where $L$ is either finite or $\infty$) is strictly
increasing, i.e., $n_1^S<n_2^S<\ldots$. The \textbf{gaps} between consecutive elements in $S$ will
be the sequence of differences%
\begin{equation}
n_2^S-n_1^S-1,n_3^S-n_2^S-1,\ldots,
\end{equation}
where, if $S$ is finite add $\infty$ at the end. Defining%
\begin{equation}
\text{Gap }(S):=\limsup_{k}(n_{k+1}^S-n_{k}^S-1),
\end{equation}

makes $\text{Gap}$ a\textit{ }multi-family\textit{ }on\textit{ }$\mathbb{N}$,
thus taking values in $\{0,1,\ldots,\infty\}$, in particular, taking the value
$\infty$ for (among others) any finite $S$.

Note that if $\text{Gap }(S)$ is finite then there must be an infinite number
of differences $(n_{k+1}^S-n_{k}^S-1)$ equal to $\text{Gap }(S)$, but
this is not true for any larger integer - because by the definition of
$\limsup$ and because we are dealing with integer-valued items, a finite
$\limsup$ must actually be attained an infinite number of times.

Observe further that the larger the set $S$ is -- the smaller (or equal) is
$\text{Gap }(S)$. Thus, $\text{Gap}$ is a decreasing multi-family.

Define the complement-multi-family for some multi-family $\mathcal{G}$ on the
subsets of a set $X$ by%
\begin{equation}
(\text{co}\,\mathcal{G})(S):=\mathcal{G}(S^c),\quad\forall S\subseteq X
\label{eq:script-G-complement}%
\end{equation}

where $S^c$ is the complement of $S$ in $X$.

We will focus on $\text{coGap}:=\text{co}\,{Gap}$. For any $S\subseteq
\mathbb{N}$, let us denote by $c_{S}$ the maximal number of integers between
consecutive elements of $S,$ namely, between $n_{\ell}^S\in S$ and
$n_{\ell+1}^S\in S$. If $S$ has arbitrarily big such `intervals' between
consecutive elements then we write $c_{S}=\infty$. With this in mind,
$\text{coGap}=\text{Gap}^c$ is an increasing multi-famil\textit{y} equal
to $(c_{S})$\,$\forall S\subseteq\mathbb{N}$.
\end{example}

\subsection{Extensions of Notions Pertaining to Families to Multi-families\label{sbs:transferring}}

We now extend some of the notions of Subsection \ref{sbs:Ev} to multi-families.

\begin{definition}
\label{def:star-set copy(1)}Given a multi-family $\mathcal{M}$ on the subsets
of a set $X$.
The \textbf{star set associated with} $\mathcal{M}$, denoted by\thinspace\thinspace
$\text{Star }(\mathcal{M})$, is the multi-set $M$ on $X$ whose value on some $x$ -- the multiplicity of $x$
with respect to it, is defined to be the multiplicity $\mathcal{M}(\{x\})$ of the singleton $\{x\}$
with respect to $\mathcal{M}$.
\begin{equation}
\text{Star }(\mathcal{M})(x):=M(\{x\}).
\end{equation}
\end{definition}

\begin{definition}
\label{def:push copy(1)}Given are a multi-family $\mathcal{M}$ on the subsets of
$X$ and a mapping between sets $f:X\rightarrow Y$.
The \textbf{push} of $\mathcal{M}$ by $f$ is defined as the multi-family on the subsets
of $Y$ given by
\begin{equation}
\text{Push }(f,\mathcal{M})(S):=\mathcal{M}(f^{-1}(S)).
\end{equation}
\end{definition}

\begin{definition}
\label{def:finite-intensive copy(1)}A multi-family $\mathcal{M}$ of subsets of
$\mathbb{N}$
a \textbf{finitely-insensitive multi-family}
if for any $S\in\mathcal{M}$, finitely changing $S$, i.e.,\ adding and/or
deleting a finite number of its members, will not change its multiplicity,
i.e., will result in a set $S^{\prime}\in\mathcal{M}$ such that
$\mathcal{M}(S)=\mathcal{M}(S^{\prime})$.
\end{definition}

\begin{definition}
\label{def:distill copy(1)}Let $X$ be a Hausdorff topological space.
The \textbf{closure of an increasing multi-family }$M$\textbf{ in }$X$
, denoted by $\text{cl }\mathcal{M}$, is defined to be the (increasing)
multi-family given by
\begin{equation}
\text{cl }\mathcal{M}(S):=\min\{\mathcal{M}(U)\mid\text{
all\ \textit{open subsets }}U\subseteq X\text{ such that }S\subseteq
U\}.\text{ }%
\end{equation}

\end{definition}

\begin{definition}
\label{def:it:lim}Let $X$ be a Hausdorff topological space and let
$\mathcal{M}$ be an \textit{increasing} multi-family.
 The multi-set $M:=\text{Star}(\text{cl }\mathcal{M})$ will be called the
\textbf{multi-set-limit} of $\mathcal{M}$ and denoted by
$\lim\mathcal{M}$. It is thus given by, for any $x\in X$,
\begin{equation}
M(x)=\min\{\mathcal{M}(U)\mid\text{all open subsets }
U\subseteq X\text{ such that }x\in U\}.
\end{equation}
\end{definition}

Given a multi-family $\mathcal{M}$ on the subsets of $\mathbb{N}$.The `limiting notions'
with respect to $\mathcal{M}$ for a sequence $(x_n)_{n\in\mathbb{N}}$, are
defined as those with respect to $\text{Push }(f,\mathcal{M)}$ of
$\mathcal{M}$ to $X$ by the function $f:\mathbb{N}\rightarrow X$ which
represents the sequence $(x_n)_{\ }$. In particular, for an increasing
multi-family $\mathcal{M}$ on the subsets of $\mathbb{N}$, the multi-set limit of
$\text{Push }(f,\mathcal{M)}$ will be called the \textbf{multi-set-limit}
of $(x_n)$, denoted by $\lim_{\mathcal{M}}x_n.$

This multi-set $\mathcal{G}$ on $X$ can be described as follows. Given a point
$x\in X,$ consider the following subsets of $\mathbb{N}${%
\begin{equation}
S(U):=\{n\in\mathbb{N}\mid x_n\in U\},\text{ for open neighborhoods }U\text{ of }x.
\end{equation}
}

Then,%
\begin{equation}
\mathcal{G}(x)=\min\{\mathcal{M}(S(U))\mid\text{all\ open
subsets}\mathit{\ }U\subseteq X\text{ such that }x\in U\}.
\end{equation}

\begin{remark}\label{rerere}
Let us focus on the increasing muliti-family $\text{coGap}$ in $\mathbb{N}$ of example \ref{ex:Gap}.

Note, that for a set $S$ not to belong to $\text{coGap}$,
i.e.,\ to have $\text{coGap}(S)=0,$ just means that $S$ is finite - as a
`family, ignoring multiplicities' $\text{coGap}$ is just the family of
\textit{infinite} sets of natural\ numbers.

Thus, when we turn to the \textit{limit} of a sequence $(x_n)_{n\in\mathbb{N}}$
in a Hausdorff Space $X$ (a notion which is obviously dependent
on the topology. In a Banach or Hilbert space we will have strong and weak
limits etc.); and we take the $\text{coGap}$-limit (it will be a multi-set on
$X$, to which for some $x$ in $X$ to belong (at least) $n$ times, one must
have, for every neighborhood $U$ of $x$, that the $x_n$ stay in $U$ for some
$n$ consecutive places as far as we go); then the $\text{coGap}$-limit of
$(x_n)_{n\in\mathbb{N}}$, `forgetting the multiplicities' is just the set of
accumulation points of $(x_n)_{n\in\mathbb{N}}$ (which is, recalling the
examples \ref{exmps} in Section \ref{sbs:Ev}, just its $\mathcal{G}$-limit for
$\mathcal{G}$ the eventual family of the infinite subsets of $\mathbb{N}$).

Note that, in general, if the sequence has a limit $x^{\ast}$ (in the good old
sense) then its $\text{coGap}$-limit `includes $x^{\ast}$ infinitely many
times and does not include any other point'. This sort of indicates to what
extent the $\text{coGap}$-limit may be viewed as `more relaxed' than the
usual limit.

The inverse implication does not always hold (it holds however in a compact
space) as the following counterexample shows. In $\mathbb{R}$ (the reals),
define a sequence by%
\begin{equation}
x_{2n}:=n\text{ \ and \ }x_{2n-1}:=-1
\end{equation}
then its $\text{coGap}$-limit contains $-1$ infinitely often and does not
contain others, but $-1$ is not a limit.
\end{remark}

\subsection{Outer and Inner Increasing Multi-Families, The Outer Core and Inner Hull}\label{s:out-inn}
Let us try to relate a multi-family, a ($(0,1,\ldots,\infty$-valued) function on sets, with the likes of measures,
\begin{definition}
An increasing multi-family $\mathcal{M}$ on (the subsets) of a set $X$ will be called \textbf{outer} if it is a `finitely outer measure' on the subsets, i.e.\ if it satisfies, for any finite number of subsets $S_1,\ldots,S_k$
\begin{equation}\label{eq:out}
\mathcal{M}\left(\cup_{i=1}^k S_i\right)\le\sum_{i=1}^k\mathcal{M}(S_i).
\end{equation}
(for an infinite number of sets -- no way -- cf.\ the example of $\mathbb{N}$ as union of singletons.)
\end{definition}
Similarly,
\begin{definition}
An increasing multi-family $\mathcal{M}$ on (the subsets) of a set $X$ will be called \textbf{inner} if it is a `finitely inner measure' on the subsets, i.e.\ if it satisfies, for any finite number of \textbf{disjoint} subsets $S_1,\ldots,S_k$,
\begin{equation}\label{eq:inn}
\mathcal{M}\left(\cup_{i=1}^k S_i\right)\ge\sum_{i=1}^k\mathcal{M}(S_i).
\end{equation}
(which here easily implies the same for any infinite number of disjoint sets!)
\end{definition}

For a general increasing multi-family $\mathcal{M}$, its \textbf{outer core} $\text{Out }\mathcal{M}$
is defined as the biggest outer multi-family less than $\mathcal{M}$, namely, (as easily seen)
$$\text{Out }\mathcal{M}(S):=\min\left\{\sum_{i=1}^k\mathcal{M}(S_i)\,|\,\cup_{i=1}^k S_i=S\right\}.$$

And its \textbf{inner hull} $\text{Inn }\mathcal{M}$
is defined as the smallest inner multi-family greater than $\mathcal{M}$, namely, (as easily seen)
$$\text{Inn }\mathcal{M}(S):=\max\left\{\sum_{i=1}^k\mathcal{M}(S_i)\,|\,\cup_{i=1}^k S_i=S,\,
\text{the }S_i\text{ disjoint}\right\}.$$

Clearly $\text{Out }\mathcal{M}$ and $\text{Inn }\mathcal{M}$ thus defined will also be increasing.

\begin{remark}\label{rm:push-out-inn}
For a mapping $X\to Y$, the definition of a \textit{push} of a multi-family, and the fact that \textit{the inverse image of a complement, union, intersection,... is the complement, union, intersection,... of the inverse image(s)}, imply that \textbf{the push of an outer (resp.\ inner) multi-family is always outer (resp.\ inner)}.
\end{remark}

\medskip

\begin{remark}\label{rm:cogap}
As a little exercise, what about the example of the $(0,1)$-valued multi-family $\text{coGap}$ in $\mathbb{N}$ (example \ref{ex:Gap})?

As a brief reflection will show, \textit{it is not outer, rather its outer core is given, somewhat surprisingly, by}:

$$\text{Out }\,\text{coGap}(S):=\min(2,\text{coGap}(S)).$$

Indeed, any set $S\subset\mathbb{N}$ can be decomposed into two sets with $\text{coGap}\le 1$, that do not contain any intervals of 2 or more consecutive numbers -- take the subsets of the even (resp.\ odd) members of $S$.
Also, since $\text{coGap}$ is both finitely insensitive and vanishing only for finite sets, only decompositions into sets with non-zero value of $\text{coGap}$ matter. This makes an (infinite) set $S$ with $\text{coGap}=1$
have the same value also by $\text{Out}\,\text{coGap}$, and if $\text{coGap }(S)\ge2$ to have
$\text{Out}\,\text{coGap }(S)=2$.

By contrast, the \textit{inner hull} $\text{Inn}\,\text{coGap }(S)$ turns out `trivially', to be the mult-family equal \textit{infinity for any infinite subset of $\mathbb{N}$ while $0$ for a finite $S$}. That follows from the following `trick': every set $S$ with `intervals' of $n$ consecutive numbers as far as we go ($n>0$) we may partition into any number $K$ of \textbf{disjoint} sets with the same property (the $i$'th part as if picks the $mK+i$'th such intervals for $m=0,1,\ldots$.),
\end{remark}

\subsection{Image of a Multi-set by a Mapping}
We wish to consider what image a mapping can give to a multi-set on $X$ (such as a star of a multi-family, or a limit of a sequence with respect to a multi-family in $\mathbb{N}$, etc.)

\begin{definition}
Let $f:X\to Y$ be a mapping between sets, and let $L$ be a multi-set on $X$. Its multi-image $\textbf{multi-}f(L)$ is defined as the multi-set in $Y$:
$$\textbf{multi-}f(L)(y):=\sum_{x\in f^{-1}(y)}L(x).$$
(Note that there is no problem in the definition of the sum, since we are summing integers -- if, say, the sum comprises an infinite number of non-zero terms the sum is infinity.)
\end{definition}
Note that if $L$ takes only $\{0,1\}$-values, i.e.\ is (the characteristic function of) a set $S$, still $f(S)$ may be different from $\textbf{multi-}f(S)$, the latter may take values bigger than 1. Hence the special notation $\textbf{multi-}f(L)$.

\subsection{Some properties}

\begin{proposition}\label{prop}
\textbf{(i)a}. For a mapping between sets $f:X\to Y$, and an increasing multi-family $\mathcal{M}$ on (the subsets of) $X$,
$$\text{Out }(f\ast\mathcal{M})\ge f\ast(\text{Out }\mathcal{M}).$$

\medskip

\textbf{(i)b}. For a mapping between sets $f:X\to Y$, and an increasing multi-family $\mathcal{M}$ on (the subsets of) $X$,
$$\text{Inn }(f\ast\mathcal{M})\le f\ast(\text{Inn }\mathcal{M}).$$

\medskip

\textbf{(ii)}. For an increasing multi-family $\mathcal{M}$ on (the subsets of) a Hausdorff topological space $X$,
$$\text{Out}\,(\text{cl }\mathcal{M})\ge\text{cl}\,(\text{Out }\mathcal{M}).$$

\medskip

\textbf{(iii)}. For a continuous mapping between Hausdorff topological spaces $f:X\to Y$, and an increasing mult-family $\mathcal{M}$ on (the subsets of) $X$,
$$\text{cl }(f\ast\mathcal{M})\ge f\ast(\text{cl }\mathcal{M}).$$

\end{proposition}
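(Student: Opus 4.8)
The plan is to prove the four inequalities by unwinding the definitions of $\text{Out}$, $\text{Inn}$, $\text{cl}$, and the push $f\ast\mathcal{M}$ (i.e.\ $\text{Push}(f,\mathcal{M})$), and in each case exhibiting, for a given decomposition realizing one side, a corresponding decomposition that the other side must beat. Throughout I would use the elementary set-theoretic fact recalled in Remark \ref{rm:push-out-inn}: inverse images commute with unions, intersections, and complements, and in particular carry disjoint families to disjoint families; together with the fact that $f^{-1}$ carries a cover of $S\subseteq Y$ to a cover of $f^{-1}(S)\subseteq X$.

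For \textbf{(i)a}, fix $S\subseteq Y$ and an optimal decomposition $f^{-1}(S)=\bigcup_{i=1}^k T_i$ achieving $\text{Out }\mathcal{M}(f^{-1}(S))=\sum_i\mathcal{M}(T_i)$; the subtlety is that the $T_i$ need not be inverse images. I would instead evaluate $f\ast(\text{Out }\mathcal{M})(S)=\text{Out }\mathcal{M}(f^{-1}(S))$ directly and compare with $\text{Out }(f\ast\mathcal{M})(S)=\min\{\sum_i (f\ast\mathcal{M})(S_i):\bigcup S_i=S\}=\min\{\sum_i\mathcal{M}(f^{-1}(S_i)):\bigcup S_i=S\}$; since each such $\{f^{-1}(S_i)\}$ is a cover of $f^{-1}(S)$, every competitor on the right is a competitor for $\text{Out }\mathcal{M}(f^{-1}(S))$, so the minimum over the smaller class (right side) is $\ge$ the minimum over the larger class (left side's value), giving the claimed $\text{Out }(f\ast\mathcal{M})\ge f\ast(\text{Out }\mathcal{M})$. \textbf{(i)b} is the mirror image: $f\ast(\text{Inn }\mathcal{M})(S)=\text{Inn }\mathcal{M}(f^{-1}(S))$ is a $\max$ over \emph{disjoint} covers of $f^{-1}(S)$, while $\text{Inn }(f\ast\mathcal{M})(S)$ is a $\max$ over the sub-collection of disjoint covers of the form $\{f^{-1}(S_i)\}$ (disjointness of the $S_i$ in $Y$ forcing disjointness of the $f^{-1}(S_i)$); a $\max$ over a subset is $\le$ the $\max$ over the whole, yielding $\text{Inn }(f\ast\mathcal{M})\le f\ast(\text{Inn }\mathcal{M})$.

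For \textbf{(ii)}, fix $S\subseteq X$ and expand $\text{cl}(\text{Out }\mathcal{M})(S)=\min\{\text{Out }\mathcal{M}(U):U\text{ open},\,S\subseteq U\}=\min_U\min\{\sum_i\mathcal{M}(T_i):\bigcup_i T_i=U\}$, whereas $\text{Out}(\text{cl }\mathcal{M})(S)=\min\{\sum_i(\text{cl }\mathcal{M})(S_i):\bigcup_i S_i=S\}$, and $(\text{cl }\mathcal{M})(S_i)=\min\{\mathcal{M}(V_i):V_i\text{ open},\,S_i\subseteq V_i\}$. Given an optimal cover $S=\bigcup S_i$ with optimal opens $S_i\subseteq V_i$, the union $U:=\bigcup_i V_i$ is an open set containing $S$, and $\{V_i\}$ is a cover of $U$ by sets whose $\mathcal{M}$-values sum to $\text{Out}(\text{cl }\mathcal{M})(S)$; hence $\text{Out }\mathcal{M}(U)\le\text{Out}(\text{cl }\mathcal{M})(S)$, and a fortiori $\text{cl}(\text{Out }\mathcal{M})(S)\le\text{Out}(\text{cl }\mathcal{M})(S)$. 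For \textbf{(iii)}, fix $S\subseteq Y$; then $f\ast(\text{cl }\mathcal{M})(S)=(\text{cl }\mathcal{M})(f^{-1}(S))=\min\{\mathcal{M}(V):V\text{ open in }X,\,f^{-1}(S)\subseteq V\}$, while $\text{cl}(f\ast\mathcal{M})(S)=\min\{(f\ast\mathcal{M})(W):W\text{ open in }Y,\,S\subseteq W\}=\min\{\mathcal{M}(f^{-1}(W)):W\text{ open in }Y,\,S\subseteq W\}$; continuity of $f$ makes each $f^{-1}(W)$ an open set in $X$, and $S\subseteq W$ gives $f^{-1}(S)\subseteq f^{-1}(W)$, so each competitor on the right is a competitor for the minimum on the left, whence $\text{cl}(f\ast\mathcal{M})(S)\ge f\ast(\text{cl }\mathcal{M})(S)$.

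The recurring mechanism is thus ``a $\min$ (resp.\ $\max$) taken over a smaller index set is $\ge$ (resp.\ $\le$) the same $\min$ (resp.\ $\max$) over a larger one,'' combined with the closure of the relevant classes of decompositions under $f^{-1}$ and under intersections/unions. I expect the only genuine point requiring care to be \textbf{(ii)}: one must check that passing from the individual optimal opens $V_i\supseteq S_i$ to their union $U$ does not lose anything, i.e.\ that $\{V_i\}$ remains an admissible (finite) cover of $U$ with the right value, and that $U$ is still open — both immediate, but they are where the inequality could be mis-stated in the reverse direction if one is not careful about which family the minimum ranges over. In \textbf{(iii)} one should note that $f^{-1}(S)$ need not be closed or open, but that is irrelevant: only the opens $f^{-1}(W)$ enter, and these are open by continuity. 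Finally, monotonicity (all multi-families in sight are increasing, as noted after the definitions of $\text{Out}$, $\text{Inn}$, $\text{cl}$) guarantees that all these $\min$'s and $\max$'s over covers are the ones written, so no extra verification of well-definedness is needed.
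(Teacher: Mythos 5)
Your proposal is correct and follows essentially the same route as the paper's own proof: for each part you observe that the decompositions (resp.\ open neighborhoods) of the form $f^{-1}(S_i)$, $\bigcup_i V_i$, or $f^{-1}(W)$ constitute a sub-collection of the admissible competitors for the other side's $\min$ or $\max$, and conclude by comparing optima over nested index sets. The only cosmetic difference is that you explicitly flag (and then correctly discard) the tempting but unusable ``optimal decomposition of $f^{-1}(S)$'' starting point in \textbf{(i)a}; the substance matches the paper.
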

\begin{proof}
\textbf{(i)a}: In computing $(\text{Out }(f\ast\mathcal{M}))(S)$, we minimize, for $S=\cup_{i=1}^k S_i$, on $\sum_{i=1}^k(f\ast\mathcal{M})(S_i)=\sum_{i=1}^k\mathcal{M}(f^{-1}(S_i))$. Now $\cup_{i=1}^k(f^{-1}(S_i))=
f^{-1}\left(\cup_{i=1}^k S_i\right)=f^{-1}(S)$. So here we are minimizing the sum of $\mathcal{M}$ on some ways of writing $f^{-1}(S)$ as a finite union, namely the union of $(f^{-1}(S_i))_{i=1}^k$. So we will get a $\le$ value if we minimize on \textbf{all} ways of writing $f^{-1}(S)$ as a finite union. But the latter gives $(\text{Out }\mathcal{M})(f^{-1}(S))=(f\ast(\text{Out }\mathcal{M}))(S)$.

\medskip

\textbf{(i)b}: In computing $(\text{Inn }(f\ast\mathcal{M}))(S)$, we maximize, for $S=\cup_{i=1}^k S_i$, $S_i$ disjoint, on $\sum_{i=1}^k(f\ast\mathcal{M})(S_i)=\sum_{i=1}^k\mathcal{M}(f^{-1}(S_i))$. Now $\cup_{i=1}^k(f^{-1}(S_i))=f^{-1}\left(\cup_{i=1}^k S_i\right)=f^{-1}(S)$. So here we are maximizing the sum of $\mathcal{M}$ on some ways of writing $f^{-1}(S)$ as a disjoint finite union, namely the union of $(f^{-1}(S_i))_{i=1}^k$. So we will get a $\ge$ value if we maximize on \textbf{all} ways of writing $f^{-1}(S)$ as a finite union. But the latter gives $(\text{Out }\mathcal{M})(f^{-1}(S))=(f\ast(\text{Out }\mathcal{M}))(S)$.

\medskip

\textbf{(ii)}: In computing $(\text{Out}\,(\text{cl }\mathcal{M}))(S)$ we minimize, for $S=\cup_{i=1}^k S_i$, on
$\sum_{i=1}^k((\text{cl }\mathcal{M})(S_i))$. Each $(\text{cl }\mathcal{M})(S_i)$ is the minimum of $\mathcal{M}(U_i)$ for all open $U_i\supset S_i$. So we are minimizing on $\sum_{i=1}^k\mathcal{M}(U_i)$ for all $S_i$ such that $S=\cup_{i=1}^k S_i$ and open $U_i\supset S_i$. But each $\sum_{i=1}^k\mathcal{M}(U_i))$\,\,$\ge$ than
$(\text{Out }\mathcal{M})(\cup_{i=1}^k U_i)$ and as $\cup_{i=1}^k U_i$ is open and contains $\cup_{i=1}^k S_i=S$,
that is $\ge$ than $(\text{cl}\,(\text{Out }\mathcal{M}))(S)$. Hence the assertion.

\medskip

\textbf{(iii)}: In computing $(\text{cl }(f\ast\mathcal{M}))(S)$ for sets $S\subset Y$, we minimize on $(f\ast\mathcal{M})(U)$ for open sets $U$ in $Y$ containing $S$. But $(f\ast\mathcal{M}))(U)=\mathcal{M}(f^{-1}U)$ and, $f$ being continuous, $f^{-1}U$ is open in $X$, and of course contains $f^{-1}S$. Therefore the latter is
$\ge(\text{cl }\mathcal{M})(f^{-1}S)=(f\ast(\text{cl }\mathcal{M}))(S)$. And we are done.
\end{proof}
\begin{remark}
Here we stated facts holding generally and with `straightforward' proofs. Maybe for special cases (such as compact topological spaces) these may be strengthened (say, equality in the inequalities) with more sophisticated arguments.
\end{remark}

Yet we mention one such case

\begin{proposition}\label{prop1}
\textbf{(ii)*}. For an increasing multi-family $\mathcal{M}$ on (the subsets of) a Hausdorff topological space $X$,
and a finite set $F$,
$$\text{Inn}\,(\text{cl }\mathcal{M})(F)\le(\text{cl}\,(\text{Inn}\mathcal{M}))(F).$$
\end{proposition}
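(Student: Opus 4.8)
The plan is to unwind both sides along the lines of the routine proofs of Proposition~\ref{prop}, but this time being careful about the interaction between the \emph{maximum over disjoint decompositions} and the \emph{minimum over enclosing open sets}, since now the two quantifiers are in the ``wrong'' order relative to the easy cases. First I would fix a finite set $F=\{x_1,\dots,x_m\}\subseteq X$ and spell out the left-hand side: $(\text{cl }\mathcal M)$ evaluated on singletons is just $\text{Star}(\text{cl }\mathcal M)=\lim\mathcal M$, so $(\text{cl }\mathcal M)(\{x_j\})=\min\{\mathcal M(U)\mid U\text{ open},\ x_j\in U\}$; and since the points of $F$ are the only ``atoms'' that can contribute, and in a Hausdorff space we may separate $x_1,\dots,x_m$ by pairwise disjoint open sets $U_1,\dots,U_m$, the inner hull over decompositions of $F$ reduces to $\text{Inn}(\text{cl }\mathcal M)(F)\le\sum_{j=1}^m(\text{cl }\mathcal M)(\{x_j\})$ — in fact I expect equality, because any disjoint decomposition of $F$ into subsets $F_1,\dots,F_k$ only loses: $(\text{cl }\mathcal M)(F_i)\le\sum_{x\in F_i}$ is \emph{not} automatic, but $(\text{cl }\mathcal M)$ being increasing gives $(\text{cl }\mathcal M)(F_i)\ge$ each singleton term, so I would argue the maximizing decomposition is the one into singletons. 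This is the first point to get exactly right.

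Next I would expand the right-hand side: $(\text{cl}\,(\text{Inn }\mathcal M))(F)=\min\{(\text{Inn }\mathcal M)(U)\mid U\text{ open},\ F\subseteq U\}$, and $(\text{Inn }\mathcal M)(U)=\max\{\sum_{i=1}^k\mathcal M(T_i)\mid \cup T_i=U,\ T_i\text{ disjoint}\}$. The strategy is: given any open $U\supseteq F$, produce a disjoint decomposition of $U$ whose $\mathcal M$-sum dominates $\sum_{j=1}^m(\text{cl }\mathcal M)(\{x_j\})$. Shrink $U$ if necessary so that $U\subseteq U_1\cup\dots\cup U_m$ with the $U_j$ disjoint open neighborhoods of the $x_j$ (this is where Hausdorffness is used a second time), set $V_j:=U\cap U_j$, which are disjoint open sets with $x_j\in V_j$ and $\cup_j V_j=U$; then $\sum_j\mathcal M(V_j)\le(\text{Inn }\mathcal M)(U)$ since the $V_j$ form a disjoint decomposition, while each $\mathcal M(V_j)\ge(\text{cl }\mathcal M)(\{x_j\})$ because $V_j$ is one of the open sets competing in the $\min$ defining $(\text{cl }\mathcal M)(\{x_j\})$. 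Taking the $\min$ over $U$ on the left of that chain yields $(\text{cl}\,(\text{Inn }\mathcal M))(F)\ge\sum_{j=1}^m(\text{cl }\mathcal M)(\{x_j\})\ge\text{Inn}(\text{cl }\mathcal M)(F)$, which is the claim.

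The main obstacle I anticipate is the first step — justifying that on a \emph{finite} set the inner hull of \emph{any} increasing multi-family collapses to the sum over the points, i.e.\ $(\text{Inn }\mathcal N)(F)=\sum_{x\in F}\mathcal N(\{x\})$ for $\mathcal N$ increasing and $F$ finite. The subtlety is that a disjoint decomposition of $F$ into non-singleton blocks $F_i$ contributes $\sum_i\mathcal N(F_i)$, and monotonicity only tells us $\mathcal N(F_i)\ge\mathcal N(\{x\})$ for each single $x\in F_i$, \emph{not} $\mathcal N(F_i)\le\sum_{x\in F_i}\mathcal N(\{x\})$, so a priori a coarse decomposition could \emph{beat} the singleton one. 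I would resolve this by observing that $(\text{Inn }\mathcal N)$ is itself inner, hence superadditive on disjoint unions, and that the singleton decomposition is \emph{a} competitor, giving $(\text{Inn }\mathcal N)(F)\ge\sum_x\mathcal N(\{x\})$; for the reverse I would use that $(\text{cl }\mathcal M)(\{x\})$ specifically is built from a $\min$ of $\mathcal M$-values over open sets, so for the particular $\mathcal N=\text{cl }\mathcal M$ and points separated by disjoint opens $U_j$, any block $F_i$ sits inside the disjoint union $\bigcup_{x_j\in F_i}U_j$ and one can re-refine $F_i$ — actually the cleanest route is to avoid claiming the collapse for general $\mathcal N$ and instead directly show $\text{Inn}(\text{cl }\mathcal M)(F)\le\sum_j(\text{cl }\mathcal M)(\{x_j\})$ by taking, for a maximizing disjoint decomposition $F=\sqcup F_i$, enclosing disjoint opens and splitting each $F_i$ accordingly, pushing the sum back up to the singleton value via the $\min$-formula for $\text{cl }\mathcal M$. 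If even that turns out delicate, a fallback is to prove the inequality only under the extra (but natural for the paper's examples) hypothesis that $\mathcal M$ is \emph{outer}, so that $\text{cl }\mathcal M$ is subadditive and the collapse is immediate.
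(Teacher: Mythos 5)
Your overall mechanism in the second half (separating by disjoint open sets inside a given open $U\supseteq F$, using superadditivity of $\text{Inn}\,\mathcal{M}$ on disjoint unions, and the $\min$-formula for $\text{cl}\,\mathcal{M}$) is the right one, and that half does correctly establish $(\text{cl}\,(\text{Inn}\,\mathcal{M}))(F)\ge\sum_{j}(\text{cl}\,\mathcal{M})(\{x_j\})$. The gap is the other half: the inequality $\text{Inn}\,(\text{cl}\,\mathcal{M})(F)\le\sum_{j}(\text{cl}\,\mathcal{M})(\{x_j\})$ that you hope to prove (let alone the claimed equality, ``the maximizing decomposition is the one into singletons'') is false for a general increasing $\mathcal{M}$. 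The one-block decomposition $F=F$ is itself a competitor in the maximum, so $\text{Inn}\,(\text{cl}\,\mathcal{M})(F)\ge(\text{cl}\,\mathcal{M})(F)$ always, and this can strictly exceed the singleton sum: take $X=\{a,b\}$ discrete, $\mathcal{M}(\emptyset)=\mathcal{M}(\{a\})=\mathcal{M}(\{b\})=0$, $\mathcal{M}(X)=5$; then $\text{cl}\,\mathcal{M}=\mathcal{M}$ and $\text{Inn}\,(\text{cl}\,\mathcal{M})(X)=5$ while $\sum_x(\text{cl}\,\mathcal{M})(\{x\})=0$ (the proposition itself still holds there, both sides being $5$). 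You correctly sense this danger in your last paragraph, but every repair you sketch --- re-refining the blocks, ``pushing the sum back up to the singleton value'' --- is still aimed at that same false inequality, and the fallback of assuming $\mathcal{M}$ outer proves a strictly weaker statement than the one asserted.

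The fix is not to refine to singletons at all. Take a disjoint decomposition $F=\cup_{i=1}^k F_i$ attaining the maximum in $\text{Inn}\,(\text{cl}\,\mathcal{M})(F)$ (attained, $F$ being finite), and run your own second-half argument on the blocks $F_i$ rather than on the points: given any open $U\supseteq F$, Hausdorffness separates the finitely many points of $F$, hence the blocks, by pairwise disjoint open sets $U_i\supseteq F_i$ with $\cup_i U_i\subseteq U$; then
$$(\text{Inn}\,\mathcal{M})(U)\ \ge\ (\text{Inn}\,\mathcal{M})\left(\cup_{i=1}^k U_i\right)\ \ge\ \sum_{i=1}^k\mathcal{M}(U_i)\ \ge\ \sum_{i=1}^k(\text{cl}\,\mathcal{M})(F_i)\ =\ \text{Inn}\,(\text{cl}\,\mathcal{M})(F),$$
and minimizing over $U$ gives the claim. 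This is exactly the paper's proof of Proposition \ref{prop1}; your draft already contains all of its ingredients, just deployed on the wrong decomposition.
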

\begin{proof}
In computing $(\text{Inn}\,(\text{cl}\mathcal{M}))(F)$ we maximize, for $F=\cup_{i=1}^k F_i$,\,\,$F_i$ disjoint (and necessarily finite), on $\sum_{i=1}^k((\text{cl }\mathcal{M})(F_i))$.

Pick a decomposition $F=\cup_{i=1}^k F_i$ which gives the maximum, that being the value of
$(\text{Inn}\,(\text{cl }\mathcal{M}))(F)$.

Each $(\text{cl }\mathcal{M})(F_i)$ is the minimum of $\mathcal{M}(U_i)$ for all open $U_i\supset F_i$, and, in a Hausdorff space, we may assume also the $U_i$ disjoint. Then we have $\sum_{i=1}^k\mathcal{M}(U_i)\le(\text{Inn }\mathcal{M})(\cup_{i=1}^k U_i)$, making (for that maximizing decomposition)
\begin{equation}\label{eq:pf}
(\text{Inn} (\text{cl}\mathcal{M}))(F)\le(\text{Inn }\mathcal{M})(\cup_{i=1}^k U_i).
\end{equation}
On the other hand, in computing $(\text{cl}\,(\text{Inn }\mathcal{M}))(F)$ we minimize on $(\text{Inn }\mathcal{M}))(U)$ for $U$ an open set containing (the finite) $F$, and, again since the space is Hausdorff and we speak about finite sets, for every decomposition $F=\cup_{i=1}^k F_i$, in particular for the above maximizing one, and every such $U$ there are disjoint open $U_i$, $U_i$ containing $F_i$, such that $\cup_{i=1}^k U_i\subset U$, implying, by equation (\ref{eq:pf}), that for each of our $U$'s\,\,
$(\text{Inn }\mathcal{M})(U)\ge(\text{Inn}\,(\text{cl }\mathcal{M}))(F)$.
\end{proof}

\subsection{Action of a Continuous Mapping on Limits with respect to \textbf{Inner} Multi-Families}
Thus, Propositions \ref{prop} and \ref{prop1} yield, but (the arguments I have work ...) only for \textbf{inner} multi-families,
\begin{theorem}\label{thm:lim}
Let $X$ and $Y$ be Hausdorff topological spaces, $f:X\to Y$ be continuous,
and $\mathcal{M}$ be an \textbf{inner} increasing multi-family on (the subsets of ) $X$. Then
$$\lim(f\ast\mathcal{M})\ge\textbf{multi-}f(\lim\mathcal{M})).$$
\end{theorem}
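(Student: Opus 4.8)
The plan is to unwind both sides of the claimed inequality through the definitions of $\lim$, $\text{cl}$, $\text{Star}$, $\textbf{multi-}f$, and $\ast$ (push), and then reduce it to the already-proved Propositions~\ref{prop}\,\textbf{(i)b}, \ref{prop}\,\textbf{(iii)} and \ref{prop1}\,\textbf{(ii)*}. Recall $\lim\mathcal{M}=\text{Star}(\text{cl }\mathcal{M})$, so the left side at a point $y\in Y$ is $(\text{cl }(f\ast\mathcal{M}))(\{y\})$, while the right side at $y$ is $\sum_{x\in f^{-1}(y)}(\text{cl }\mathcal{M})(\{x\})$, i.e.\ the value at $y$ of $\textbf{multi-}f$ applied to the multi-set $x\mapsto(\text{cl }\mathcal{M})(\{x\})$. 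So the statement to prove is pointwise: for every $y\in Y$,
\begin{equation}
(\text{cl }(f\ast\mathcal{M}))(\{y\})\ \ge\ \sum_{x\in f^{-1}(y)}(\text{cl }\mathcal{M})(\{x\}).
\end{equation}
First I would observe that the finite set $F:=\{y\}$ is a single point, hence trivially $\text{Inn}\,(\text{cl }(f\ast\mathcal{M}))(\{y\})=(\text{cl }(f\ast\mathcal{M}))(\{y\})$ and likewise $\text{Inn}\,(f\ast\mathcal{M})=f\ast\mathcal{M}$ on singletons; but more importantly, since $\mathcal{M}$ is \textbf{inner}, by Remark~\ref{rm:push-out-inn} so is $f\ast\mathcal{M}$, so there is no gap to fear on the $Y$-side either.

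The key chain of steps is then: start from $\textbf{multi-}f(\lim\mathcal{M})$ and rewrite $\lim\mathcal{M}=\text{Star}(\text{cl }\mathcal{M})$; push the "$\text{cl}$" past $\textbf{multi-}f$ using a $\text{Star}$/$\textbf{multi-}f$ bookkeeping identity together with Proposition~\ref{prop}\,\textbf{(iii)} (continuity of $f$ gives $\text{cl }(f\ast\mathcal{M})\ge f\ast(\text{cl }\mathcal{M})$); then since $\mathcal{M}$ inner implies $\text{cl }\mathcal{M}$ inner — this needs a small argument, or one invokes Proposition~\ref{prop1}\,\textbf{(ii)*} applied to finite sets, which for finite $F$ says $\text{cl }\mathcal{M}=\text{Inn}\,(\text{cl }\mathcal{M})$ up to the stated inequality — one can bring in Proposition~\ref{prop}\,\textbf{(i)b} to relate $\text{Inn}(f\ast\mathcal{M})$ and $f\ast(\text{Inn }\mathcal{M})$. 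The intended route is:
\begin{align}
\lim(f\ast\mathcal{M})&=\text{Star}(\text{cl }(f\ast\mathcal{M}))\ \ge\ \text{Star}(f\ast(\text{cl }\mathcal{M})) &&\text{by Prop.~\ref{prop}\,(iii)}\\
&\ge\ \textbf{multi-}f\bigl(\text{Star}(\text{cl }\mathcal{M})\bigr)=\textbf{multi-}f(\lim\mathcal{M}), &&
\end{align}
where the second inequality is the place the \emph{inner} hypothesis is used: $\text{Star}(f\ast\mathcal{N})(y)=(f\ast\mathcal{N})(\{y\})=\mathcal{N}(f^{-1}(\{y\}))$, whereas $\textbf{multi-}f(\text{Star}\,\mathcal{N})(y)=\sum_{x\in f^{-1}(y)}\mathcal{N}(\{x\})$, and $f^{-1}(\{y\})=\bigsqcup_{x\in f^{-1}(y)}\{x\}$ is a \emph{disjoint} union, so if $\mathcal{N}:=\text{cl }\mathcal{M}$ is inner then $\mathcal{N}(f^{-1}(\{y\}))\ge\sum_{x\in f^{-1}(y)}\mathcal{N}(\{x\})$ exactly by the inner (super-additivity) inequality (\ref{eq:inn}), extended to infinitely many disjoint pieces as the definition of "inner" explicitly permits.

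The main obstacle, and the only genuinely nontrivial link, is verifying that $\text{cl }\mathcal{M}$ is inner whenever $\mathcal{M}$ is — because the argument above collapses to a one-line application of (\ref{eq:inn}) once that is known. I would prove it by the same Hausdorff-separation device already used in Proposition~\ref{prop1}: given disjoint $S_1,\dots,S_k$ with union $S$, and open $U\supseteq S$, one wants $\sum_i(\text{cl }\mathcal{M})(S_i)\le(\text{cl }\mathcal{M})(S)$; choose open $U\supseteq S$ nearly attaining $(\text{cl }\mathcal{M})(S)=\min_{U\supseteq S}\mathcal{M}(U)$, and — since in the application the $S_i=\{x_i\}$ are points — separate them by disjoint opens $U_i\subseteq U$ with $x_i\in U_i$; then $\sum_i(\text{cl }\mathcal{M})(\{x_i\})\le\sum_i\mathcal{M}(U_i)\le\mathcal{M}(\bigsqcup_i U_i)\le\mathcal{M}(U)=(\text{cl }\mathcal{M})(S)$, the middle step using that $\mathcal{M}$ is inner on the disjoint $U_i$ and $\mathcal{M}$ increasing. (For general disjoint $S_i$ one cannot always shrink to disjoint opens inside $U$, which is precisely why the theorem is stated for $\textbf{multi-}f$ of a limit — a multi-set supported on points — and not in greater generality; this mirrors the "only for inner multi-families" caveat the authors flag before the statement.) Putting the pieces together in the displayed chain above, with Proposition~\ref{prop}\,\textbf{(iii)} supplying the first inequality and the inner-ness of $\text{cl }\mathcal{M}$ supplying the second, completes the proof.
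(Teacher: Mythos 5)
Your proposal is correct and follows essentially the same route as the paper: both start by applying $\text{Star}$ to Proposition~\ref{prop}\,\textbf{(iii)} to get $\text{Star}(\text{cl}(f\ast\mathcal{M}))\ge\text{Star}(f\ast(\text{cl }\mathcal{M}))$, and both then reduce the remaining pointwise inequality $(\text{cl }\mathcal{M})(f^{-1}(y))\ge\sum_{x\in f^{-1}(y)}(\text{cl }\mathcal{M})(\{x\})$ to superadditivity of $\text{cl }\mathcal{M}$ over finite singleton decompositions, passed to the infinite sum via increasingness. The only cosmetic difference is that you re-derive that superadditivity directly by the Hausdorff separation of finitely many points, whereas the paper obtains it by writing $\mathcal{M}=\text{Inn }\mathcal{M}$ and citing Proposition~\ref{prop1} together with the definition of $\text{Inn}$ --- the same argument, packaged differently.
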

\begin{proof}
In Proposition \ref{prop} \textbf{(iii)}, take the star of both sides to get
$$\text{Star}\left(\text{cl}(f\ast\mathcal{M})\right)\ge
\text{Star}\left(f\ast(\text{cl }\mathcal{M})\right).$$
The LHS here is the (multi-set) $\lim(f\ast\mathcal{M})$. The RHS is the multi-set whose value at some $y\in Y$ is
\begin{eqnarray*}
&&\left(f\ast(\text{cl }\mathcal{M})\right)(\{y\})\\
&&=(\text{cl }\mathcal{M})(f^{-1}(y))\\
&&=(\text{cl}\,(\text{Inn }\mathcal{M}))(f^{-1}(y))
\end{eqnarray*}
($\mathcal{M}$ being inner). Then, by, Proposition \ref{prop1}, for every finite $F\subset f^{-1}(y)$
\begin{eqnarray*}
&&\ge(\text{Inn}\,(\text{cl }\mathcal{M}))(F)\\
&&=(\text{Inn}\,(\text{cl }\mathcal{M}))(\cup_{x\in F}\{x\})\\
&&\ge\sum_{x\in f^{-1}(y)}\,(\text{cl }\mathcal{M}))(\{x\})\\
&&=\sum_{x\in F}(\text{Star}\,(\text{cl }\mathcal{M}))(x)\\
&&=\sum_{x\in F}(\lim\mathcal{M})(x).
\end{eqnarray*}
That holding for any finite $F\subset f^{-1}(y)$ means
$$\ge\sum_{x\in f^{-1}(y)}(\lim\mathcal{M})(x)=(\textbf{multi-}f(\lim\mathcal{M}))(y).$$
\end{proof}
For sequences, noting remark \ref{rm:push-out-inn} we get
\begin{corollary}\label{Cor:inn-seq}
Let $X$ and $Y$ be Hausdorff topological spaces, $f:X\to Y$ be continuous,

and suppose $\mathcal{M}$ is an \textbf{inner} increasing multi-family in $\mathbb{N}$,

Then for a sequence $(x_n)$ in a $X$, (recall these limits are multi-sets on the spaces)
$$\mathcal{M}\text{-}\lim(f(x_n))\ge\textbf{multi-}f(\mathcal{M}\text{-}\lim x_n).$$
\end{corollary}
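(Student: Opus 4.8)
The plan is to derive Corollary~\ref{Cor:inn-seq} as a direct specialization of Theorem~\ref{thm:lim}, with the only extra input being Remark~\ref{rm:push-out-inn}. Recall that by the definitions at the end of \S\ref{sbs:transferring}, for a sequence $(x_n)$ in $X$ the $\mathcal{M}$-limit notions are, by fiat, those of $\text{Push}(g,\mathcal{M})$ where $g:\mathbb{N}\to X$ is the map $n\mapsto x_n$; and likewise $\mathcal{M}\text{-}\lim(f(x_n))$ is the multi-set-limit of $\text{Push}(h,\mathcal{M})$ where $h=f\circ g:\mathbb{N}\to Y$ represents the sequence $(f(x_n))$.

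First I would reduce everything to multi-families on $X$. Set $\mathcal{N}:=\text{Push}(g,\mathcal{M})$, a multi-family on the subsets of $X$. By Remark~\ref{rm:push-out-inn}, since $\mathcal{M}$ is inner, $\mathcal{N}$ is again inner; and it is clearly increasing (a push of an increasing multi-family is increasing, as the inverse image respects inclusions). Next, I would observe that push is functorial: $\text{Push}(h,\mathcal{M})=\text{Push}(f,\text{Push}(g,\mathcal{M}))=f\ast\mathcal{N}$, because $h^{-1}(S)=g^{-1}(f^{-1}(S))$ for every $S\subseteq Y$. Hence $\mathcal{M}\text{-}\lim(f(x_n))=\lim(f\ast\mathcal{N})$ and $\mathcal{M}\text{-}\lim x_n=\lim\mathcal{N}$ as multi-sets.

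Now I would simply invoke Theorem~\ref{thm:lim} with $\mathcal{N}$ (inner, increasing, on the Hausdorff space $X$) in place of $\mathcal{M}$ and the continuous $f:X\to Y$ as given. That yields
\[
\lim(f\ast\mathcal{N})\ge\textbf{multi-}f(\lim\mathcal{N}),
\]
which, after substituting the two identifications of the previous paragraph, is exactly
\[
\mathcal{M}\text{-}\lim(f(x_n))\ge\textbf{multi-}f(\mathcal{M}\text{-}\lim x_n),
\]
the claimed inequality of multi-sets on $Y$.

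The only point requiring any care — and the place where the hypotheses genuinely enter — is the verification that $\mathcal{N}=\text{Push}(g,\mathcal{M})$ is still an \textbf{inner} multi-family, since Theorem~\ref{thm:lim} fails in general for multi-families that are not inner. This is precisely the content of Remark~\ref{rm:push-out-inn}, so I would cite it explicitly rather than re-prove it; the underlying reason is that $g^{-1}$ carries a disjoint finite union $S=\sqcup_{i=1}^k S_i$ to the disjoint finite union $g^{-1}(S)=\sqcup_{i=1}^k g^{-1}(S_i)$, so $\mathcal{M}(g^{-1}(S))\ge\sum_i\mathcal{M}(g^{-1}(S_i))$ translates the inner inequality for $\mathcal{M}$ into the inner inequality for $\mathcal{N}$. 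No compactness or further topological input on $\mathbb{N}$ is needed, since the sequence limits are defined purely via the push. Everything else is the bookkeeping identity $h^{-1}=g^{-1}\circ f^{-1}$ together with the definitional unfolding of ``$\mathcal{M}$-limit of a sequence.''
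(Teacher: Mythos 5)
Your proposal is correct and follows exactly the route the paper intends: the corollary is stated immediately after the remark ``For sequences, noting Remark~\ref{rm:push-out-inn} we get,'' i.e.\ one pushes $\mathcal{M}$ forward by the sequence map $g:\mathbb{N}\to X$, uses Remark~\ref{rm:push-out-inn} to keep the inner property, identifies $\text{Push}(f\circ g,\mathcal{M})=f\ast\text{Push}(g,\mathcal{M})$, and applies Theorem~\ref{thm:lim}. Your write-up merely makes explicit the functoriality of the push and the definitional unfolding that the paper leaves implicit.
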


And compare what will be said in \S\ref{s:inn-ev}

\subsection{Special Properties of Inner Eventual Families}\label{s:inn-ev}

An inner eventual family, i.e.\ to which two disjoint sets cannot both belong (\textbf{(I)} in (\ref{it:inn})), shares with a filter (by the same reasoning) the fact that \textbf{in a Hausdorff topological space it can have at most one limit point, consequent for an inner eventual family $\mathcal{E}$ the $\mathcal{E}$-limit of a sequence in a Hausdorff space, if it exisrs, is unique.}

Thus, the statements about an inner $\mathcal{M}$, of Theorem \ref{thm:lim}:
$$\lim(f\ast\mathcal{M})\ge\textbf{multi-}f(\lim\mathcal{M})),$$
and of Corollary \ref{Cor:inn-seq}:
$$\mathcal{M}\text{-}\lim(f(x_n))\ge\textbf{multi-}f(\mathcal{M}\text{-}\lim x_n),$$
when restricted to an inner \textit{eventual family} (not general multi-families),
will really speak about a single limit point (if exists), and say that, \textit{as with limits in the (much more restrictive) good old sense},
\begin{theorem}
Let $X$ and $Y$ be Hausdorff topological spaces, $f:X\to Y$ be continuous.

\textbf{(i)} Let $\mathcal{F}$ is an \textbf{inner} eventual family on (the subsets of ) $X$.

Then, if $\lim\mathcal{F}$ exists (an element of $X$, necessarily unique),

then $\lim(f\ast\mathcal{M})$ also exists (an element of $Y$) and is equal to
$f(\lim\mathcal{M}))$.

\textbf{(ii)} Let $\mathcal{E}$ is an \textbf{inner} eventual family in $\mathbb{N}$.

Given a sequence $(x_n)$ in a $X$, if its $\mathcal{E}\text{-}\lim x_n$ in $X$ exists
(an element of $X$, necessarily unique),

then $\mathcal{E}\text{-}\lim(f(x_n))$ also exists and is equal to
$f(\mathcal{E}\text{-}\lim x_n)$
\end{theorem}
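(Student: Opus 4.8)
The plan is to deduce this theorem as a specialization of Theorem \ref{thm:lim} and Corollary \ref{Cor:inn-seq} to the case where the inner increasing multi-family is (the indicator of) an eventual \emph{family} $\mathcal{F}$, i.e.\ a $\{0,1\}$-valued multi-family, together with the uniqueness observation made at the start of \S\ref{s:inn-ev}. First I would record what "inner" means for a $\{0,1\}$-valued increasing multi-family: since \eqref{eq:inn} with $k=2$ and $S_1,S_2$ disjoint forces $\mathcal{F}(S_1)+\mathcal{F}(S_2)\le\mathcal{F}(S_1\cup S_2)\le 1$, an inner eventual family cannot contain two disjoint sets. Hence in a Hausdorff space, if $x\ne x'$ were both limit points, pick disjoint open neighborhoods $U\ni x$, $U'\ni x'$; both lie in $\mathcal{F}$, contradiction. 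So $\lim\mathcal{F}=\text{Star}(\text{cl}\,\mathcal{F})$, a priori a multi-set, is supported on at most one point and there with multiplicity $\le 1$; thus it is either empty or a single point of $X$ (and similarly for sequences via the push to $\mathbb{N}$, using Remark \ref{rm:push-out-inn} that the push of an inner multi-family is inner).

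Next, for part \textbf{(i)}, suppose $x^\ast:=\lim\mathcal{F}$ exists in $X$. Apply Theorem \ref{thm:lim}: $\lim(f\ast\mathcal{F})\ge\textbf{multi-}f(\lim\mathcal{F})$. The right-hand side, by the definition of $\textbf{multi-}f$ and since $\lim\mathcal{F}$ is the indicator of $\{x^\ast\}$, is the indicator of $\{f(x^\ast)\}$ — i.e.\ it assigns multiplicity $1$ to $y=f(x^\ast)$ and $0$ elsewhere. So $\lim(f\ast\mathcal{F})$ has value $\ge 1$ at $f(x^\ast)$, meaning $f(x^\ast)$ is a limit point of the multi-family $f\ast\mathcal{F}$. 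Now I must check that $f\ast\mathcal{F}$ is again an inner eventual family: it is $\{0,1\}$-valued and increasing (it is a family of subsets of $Y$), and Remark \ref{rm:push-out-inn} gives that it is inner. Therefore by the uniqueness just established, $f(x^\ast)$ is \emph{the} unique limit point, so $\lim(f\ast\mathcal{F})$ exists and equals $f(x^\ast)=f(\lim\mathcal{F})$. Part \textbf{(ii)} is the literal translation of this through $\text{Push}$: a sequence $(x_n)$ is a map $g:\mathbb{N}\to X$, the $\mathcal{E}$-limit of $(x_n)$ is $\lim(g\ast\mathcal{E})$, $f\circ g$ represents $(f(x_n))$, and $(f\circ g)\ast\mathcal{E}=f\ast(g\ast\mathcal{E})$; applying part \textbf{(i)} to the inner eventual family $g\ast\mathcal{E}$ on $X$ (inner by Remark \ref{rm:push-out-inn}) gives exactly $\mathcal{E}\text{-}\lim(f(x_n))=f(\mathcal{E}\text{-}\lim x_n)$, which is also immediate from Corollary \ref{Cor:inn-seq} plus uniqueness.

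I expect the only real subtlety — and the "main obstacle" such as it is — to be the bookkeeping that keeps everything $\{0,1\}$-valued: one has to argue that for an inner eventual \emph{family} the multi-set $\textbf{multi-}f(\lim\mathcal{F})$ does not somehow acquire multiplicity $>1$ (it cannot, since $\lim\mathcal{F}$ is supported on a single point, so no collapsing of distinct points occurs under $f$), and conversely that the inequality $\lim(f\ast\mathcal{F})\ge\textbf{multi-}f(\lim\mathcal{F})$ combined with the upper bound "$\le$ indicator of one point" from uniqueness pinches $\lim(f\ast\mathcal{F})$ to be exactly that indicator rather than merely $\ge$ it. Both are short once the uniqueness lemma for inner eventual families is in place; everything else is pure transport along $\text{Push}$ and $\textbf{multi-}f$, for which the needed compatibilities ($f^{-1}$ commuting with unions/intersections/complements, $\textbf{multi-}f$ of an indicator of a point being an indicator of a point) were already invoked in Remark \ref{rm:push-out-inn} and the proof of Theorem \ref{thm:lim}. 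I would present the argument in the order: (1) inner $\Rightarrow$ at most one limit point, in $X$ and (via push) for sequences; (2) deduce \textbf{(i)} from Theorem \ref{thm:lim} and step (1); (3) deduce \textbf{(ii)} from \textbf{(i)} (equivalently from Corollary \ref{Cor:inn-seq}) by the $\text{Push}$ translation.
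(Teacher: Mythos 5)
Your proposal is correct and follows essentially the same route the paper intends: the paper gives no formal proof environment for this theorem, but the preceding paragraph of \S\ref{s:inn-ev} derives it exactly as you do, from the observation that an inner eventual family cannot contain two disjoint sets (hence has at most one limit point in a Hausdorff space) combined with Theorem \ref{thm:lim} and Corollary \ref{Cor:inn-seq}. Your write-up is in fact somewhat more careful than the paper's, e.g.\ in checking via Remark \ref{rm:push-out-inn} that the pushed family is again inner so that uniqueness applies on the target side.
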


\section{More about Eventual Families, Associated and Self-Associated, Inner, Outer -- Generalization of Ultrafilters?}\label{s:Ev-Sa-Inn-Out}

\subsection{Dually Associating an Eventual Family $\mathcal{F}$ with the Family of Sets with Complements not in $\mathcal{F}$}\label{s:aso}
For an eventual family $\mathcal{F}$ in some set $X$, Proposition \ref{lem:simple-observ} told us that its complement -- the family of sets not in $\mathcal{F}$, is a co-eventual family and vice-versa. But clearly also the \textit{set of complements of the members of $\mathcal{F}$} is a co-eventual family and vice-versa.

These are different. For instance, for the family $\mathcal{G}$ of all infinite subsets of $\mathbb{N}$ and the family $\mathcal{H}$ of all subsets of $\mathbb{N}$ with finite complement (cf.\ examples \ref{exmps}), the complement of $\mathcal{G}$ is the family of finite sets -- indeed just the set of complements of the members of $\mathcal{H}$, while the family of complements of the members of $\mathcal{G}$ -- all subsets of $\mathbb{N}$ with infinite complements -- is just the complement of $\mathcal{H}$!

This may be generalized. For any family $\mathcal{F}$ in some set $X$, as if `go to the co-eventual one way and return the other way' to get what we call here its \textbf{associate} eventual family $\mathcal{F}$ in $X$, defined as
$$\text{Aso }(\mathcal{F}):= \text{the family of all subsets whose complement is not in}\,\mathcal{F}.$$
Clearly $\text{Aso}\circ\text{Aso}=\text{id}$ -- it is an involution, a kind of duality. And as we saw
it pairs, in particular, the above $\mathcal{G}$ and $\mathcal{H}$ on (the subsets of) $\mathbb{N}$.%

Note that \textbf{The `larger' $\mathcal{F}$ is the `smaller' $\text{Aso }\mathcal{F}$ will be}.

For a mapping $X\to Y$, the definition of a \textit{push} of an eventual family, and the fact that \textit{the inverse image of a complement is the complement of the inverse image}, imply that \textbf{the associate of the push is the push of the associate}.

\medskip

\subsection{Partial Generalization to Multi-Families}
For a multi-family on (the subsets) of a set $X$, one readily generalizes \textit{taking the family of complements} (cf.\ example \ref{ex:Gap}) to $\mathcal{M}\hookrightarrow \text{co }\mathcal{M}$ defined by
$\text{co }\mathcal{M}(S):=\mathcal{M}(S^c)$\,\,($S^c$ denotes the complement of $S$).

The other way -- `the complement of $\mathcal{F}$, i.e.\ the family of sets not in $\mathcal(E)$' does not seem to generalize readily, so we do not have a duality such as $\text{Aso}$.

Yet we can easily see that

\begin{proposition}
If $\mathcal{M}$ is an \textit{increasing} multi-family on (the subsets of) some set $X$, then the associate of the (eventual family) the (upper) level set $\{S\,|\,\mathcal{M}(S)\ge n+1\}$ is the (eventual family) the (lower) level set (of, here, a \textit{decreasing} multi-family) $\{S\,|\,(\text{co }\mathcal{M})(S)\le n\}$. (recall, these are integers, $\le n$ is the negation of $\ge n+1$ -- one need not bother with $<$ etc.)
\end{proposition}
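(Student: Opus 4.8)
The statement is really just an unwinding of the definition of $\text{Aso}$ together with the definitions of $\text{co}\,\mathcal{M}$ and of upper/lower level sets, so the plan is to chase the membership conditions through these definitions and use the fact that for integer-valued quantities ``$\ge n+1$'' and ``$\le n$'' are exact negations of each other. First I would fix $n$ and set $\mathcal{F}:=\{S\subseteq X\mid\mathcal{M}(S)\ge n+1\}$; since $\mathcal{M}$ is increasing, $\mathcal{F}$ is indeed an eventual family, which is what makes it legitimate to speak of $\text{Aso}(\mathcal{F})$. By the definition in Subsection \ref{s:aso}, a subset $S$ lies in $\text{Aso}(\mathcal{F})$ precisely when its complement $S^c$ is \emph{not} in $\mathcal{F}$, i.e.\ precisely when $\mathcal{M}(S^c)\ge n+1$ fails.

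Next I would observe that $\mathcal{M}(S^c)$ is, by definition, exactly $(\text{co}\,\mathcal{M})(S)$. Hence ``$\mathcal{M}(S^c)\ge n+1$ fails'' is ``$(\text{co}\,\mathcal{M})(S)\ge n+1$ fails''. Because $\mathcal{M}$ takes values in $\{0,1,\ldots,\infty\}$, so does $\text{co}\,\mathcal{M}$, and for such a value $v$ the negation of $v\ge n+1$ is exactly $v\le n$ (this is the only place any ``content'' enters, and it is the remark the parenthetical in the statement is flagging). Therefore $S\in\text{Aso}(\mathcal{F})$ iff $(\text{co}\,\mathcal{M})(S)\le n$, i.e.\ $\text{Aso}(\mathcal{F})$ is exactly the lower level set $\{S\mid(\text{co}\,\mathcal{M})(S)\le n\}$, which is what was claimed. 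I would also note in passing that since $\mathcal{M}$ is increasing, $\text{co}\,\mathcal{M}$ is decreasing, so this lower level set is again an eventual family, consistent with $\text{Aso}$ mapping eventual families to eventual families.

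There is no real obstacle here; the only thing to be careful about is the role of $\infty$ as a possible value, so that the dichotomy ``$\ge n+1$ or $\le n$'' is genuinely exhaustive and exclusive --- which it is, since $\infty\ge n+1$ always holds and $\infty\le n$ never does, and for finite values it is the ordinary trichotomy restricted to integers. I would state this explicitly so the reader sees why we ``need not bother with $<$ etc.''. Everything else is a direct substitution of definitions, so the write-up can be kept to a few lines.
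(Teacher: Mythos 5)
Your proof is correct: the paper itself leaves the proof environment empty, treating the claim as an immediate unwinding of the definitions of $\text{Aso}$, $\text{co}\,\mathcal{M}$, and the level sets, which is exactly the definition-chase you carry out. Your explicit remark that the dichotomy ``$v\ge n+1$ or $v\le n$'' remains exhaustive and exclusive when $v=\infty$ is a worthwhile addition that the paper only gestures at parenthetically.
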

\begin{proof}
\end{proof}

\subsection{Inner and Outer}
Let us focus on an eventual family $\mathcal{F}$ on (the subsets of) a set $X$, identified with its \textit{characteristic functions}, to be thought of as just a $(0,1)$-valued increasing multi-family.

And one may ask: when are these outer, inner, `finitely additive'? Also, recalling \S\ref{s:aso}, when such an eventual families is \textit{the associate of itself?}.

These are related to viewing the above characteristic function of $\mathcal{F}$, as in \S\ref{s:out-inn},
as ($(0,1)$- valued) likes of measures.

\begin{enumerate}
\item\label{it} Let us clarify in what way the condition for being a (finitely-additive) `measure'; `outer (resp.\ inner) measure' applies here. That concerns two \textbf{disjoint} subsets $S$ and $T$, comparing the values the characteristic function of $\mathcal{F}$ gives to $S$,\,$T$ and $S\cup T$, i.e.\ do these belong to $\mathcal{F}$ or not.

    When one of $S$ and $T$ has `measure' $0$ (i.e.\ is not in $\mathcal{F}$) while the other has `measure' $1$ (i.e.\ does belong to $\mathcal{F}$), there is only one possibility: since $\mathcal{F}$ is eventual, the union $S\cup T$ also belongs to $\mathcal{F}$, has thus `measure' $1$, and there is always `full' finite additivity: $0+1=1$.

    So we need to focus on the cases where both $S$ and $T$ have measure $0$ or both have measure $1$.

\item When both $S$ and $T$ have `measure' $0$, i.e.\ are not in $\mathcal{F}$, the union, $S\cup T$, of course, has `measure' $0$ or $1$. So \textit{the condition (\ref{eq:inn}) for `inner' always holds here}, while for equality, i.e.\ finite additivity, equivalently \textit{the condition (\ref{eq:out}) for `outer'} here, we need that also $S\cup T$ has `measure' $0$, i.e.\ is not in $\mathcal{F}$. Thus for finite additivity, equivalently `outer', to always hold here we need that

{\flushleft\textbf{(O)}\qquad\textbf{the union of two sets (hence of a finite number of sets) not in $\mathcal{F}$ is also not in $\mathcal{F}$}}

\item Analogously, when both $S$ and $T$ (hence, of course, also $S\cup T$) have `measure' $1$, i.e.\ belong to $\mathcal{F}$, \textit{the condition (\ref{eq:out}) for `outer' will always hold here}, we will never have equality, i.e.\ finite additivity, and also \textit{the condition (\ref{eq:inn}) for `inner'} never holds here.

\item\label{it:inn} Therefore when $\mathcal{F}$ is inner, in particular `finitely additive', the last case cannot occur, that is

{\flushleft\textbf{(I)}\qquad\textbf{two disjoint subsets cannot both belong to $\mathcal{F}$}}

\item\label{it:flt} So a filter is inner (unless it is the `trivial' family of all subsets) -- if it has disjoint members, it will contain their intersection $\emptyset$.

\item And we may conclude: $\mathcal{F}$ is outer if and only if \textbf{(O)} always holds, is inner if and only if \textbf{(I)} always holds, and gives a `finitely additive measure' if and only if both always hold.

\item The condition for outer here, Condition \textbf{(O)}, may be phrased thus: A set does not belong to $\mathcal{F}$ iff its complement belongs to $\text{Aso }\mathcal{F}$ (see \S\ref{s:aso}). But (de Morgan laws) the union of complements is the complement of the intersection. So \textbf{(O)} may be translated into: in $\text{Aso }\mathcal{F}$ the intersection of members is also a member, i.e.\ $\text{Aso }\mathcal{F}$ is a \textbf{filter}. Hence the condition \textbf{(O)} for outer is equivalent to

{\flushleft\textbf{(O${}^*$)}\qquad\textbf{the associate $\text{Aso }\mathcal{F}$ is a filter}}

\item Now, which eventual families are \textit{the associate of themselves?} For that $\mathcal{F}$ must have the property that a set $S$ belongs to $\mathcal{F}$ iff its complement $S^c$ does not.

    That implies, yet is stronger than, the `inner' condition \textbf{(I)}.

    If $\mathcal{F}$ is both inner and outer (and is not the trivial empty family), thus furnishes a `finitely additive measure', it clearly will be self-Aso.

    Indeed, the `measures' of $S$ and $S^c$, each $0$ or $1$, must add up to the measure of the whole set $X$ which is $1$ since $X\in\mathcal{F}$. Hence each is $0$ iff the other is $1$.

\item But a self-Aso eventual family \textbf{may not be `finitely additive'}, and we shall see (Proposition \ref{prop:flt}) that it is `finitely additive' iff it is moreover a filter.
\end{enumerate}

\begin{example}\label{ex:odd}
A simple example (which will have some general significance) of a self-Aso eventual family $\mathcal{F}$ which is not finitely additive and not a filter.

In a \textit{finite set $X$ with odd number $2N+1>1$ of elements}, let $\mathcal{F}$ is the family of all sets with $\ge N+1$ elements, i.e.\ which include more than half the elements of $X$.

It (i.e.\ its characteristic function) is not a `finitely additive probability measure' -- the singletons have measure $0$ and their union -- the whole $X$ has measure $1$.

And it clearly is not a filter -- the intersection of sets with $>N$ elements can very well have $\le N$ elements.

As we shall see, one may use this example to construct similar examples in an infinite $X$.

In particular, we may transform that to an example in an infinite set $X$: pick a finite subset $F\subset X$ with
$2N+1>1$ elements and define $\mathcal{F}$ on $X$ by: $\mathcal{F}$ is the family of all subsets of $X$ \textit{whose intersection with $F$ has $\ge N+1$ elements}.
\end{example}

Also, in the example of the pair $\mathcal{G}$ and $\mathcal{F}$, the latter is a filter, so $\mathcal{G}$ is outer.
About $\mathcal{F}$ -- the family of sets with finite complement -- the same trick as in Remark \ref{rm:cogap} of decomposing a set to its even (resp.\ odd) elements \textrm{which both will not belong to $\mathcal{F}$}, shows that $\text{Out }\mathcal{F}$ is \textrm{the zero multi-family, identified with the empty family}.

This shows that one should not `dismiss' the two `trivial' families: the empty and the one comprising all subsets --
they may have significant roles!

\subsection{A `Litmus Paper': Measured and non-Measured Partitions}
\begin{definition}\label{def:mea}
Let $\mathcal{F}$ be a self-Aso eventual family on (the subsets) of a set $X$.

Let $X=X_1\cup X_2\cup\ldots X_k$ (the $X_i$ disjoint) be a finite partition of $X$.

If some $X_i$ is in $\mathcal{F}$ the partition is called \textbf{measured}. As $\mathcal{F}$ is a self-Aso eventual family, the complement of $X_i$ is then not in $\mathcal{F}$. As the latter contains all the unions of the $X_j, j\neq i$, also these are not in $\mathcal{F}$. While the other finite unions -- those including $X_j$ must be in $\mathcal{F}$ since $X_i$ is.

If no $X_i$ is in $\mathcal{F}$ call the partition \textbf{non-measured}. Then $\mathcal{F}$ cannot be a `finitely additive measure' since the $X_i$ have `measure' $0$ and their union $X$ has `measure' $1$
\end{definition}
Note that when the partition is into \textbf{three} parts, the two-element unions are the complements of the parts, so the value the `measure' gives to the parts determines everything concerning the unions of parts.

\begin{proposition}\label{prop:flt}
Let $\mathcal{F}$ be a self-Aso eventual family on (the subsets) of a set $X$. TFAE

\begin{enumerate}
\item\label{1} (The characteristic function) of $\mathcal{F}$ is finitely additive

\item\label{2} There are no non-measured partitions

\item\label{3} There are no non-measured partitions into three sets.

\item\label{4} $\mathcal{F}$ is a filter (thus an ultrafilter)
\end{enumerate}
\end{proposition}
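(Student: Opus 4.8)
The plan is to prove the cycle of implications $(\ref{1})\Rightarrow(\ref{2})\Rightarrow(\ref{3})\Rightarrow(\ref{4})\Rightarrow(\ref{1})$, since these close up the equivalence most economically. The implication $(\ref{1})\Rightarrow(\ref{2})$ is just the contrapositive of the last observation in Definition \ref{def:mea}: a non-measured partition exhibits sets of `measure' $0$ whose union $X$ has `measure' $1$, contradicting finite additivity. The implication $(\ref{2})\Rightarrow(\ref{3})$ is trivial, since a partition into three sets is a particular finite partition. The implication $(\ref{4})\Rightarrow(\ref{1})$ I would get from the discussion in item (\ref{it:flt}) of \S\ref{s:Ev-Sa-Inn-Out}: a filter (other than the trivial family of all subsets, which is not self-Aso) is inner, and a self-Aso family is always inner and—by the argument already recorded—if it is a filter then condition \textbf{(O)} holds as well (its associate, which equals $\mathcal{F}$ itself, is a filter), so $\mathcal{F}$ is both inner and outer, hence finitely additive. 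Equivalently one can argue directly: an ultrafilter's characteristic function is the classic finitely-additive $\{0,1\}$-measure.

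The real content is $(\ref{3})\Rightarrow(\ref{4})$: from the absence of non-measured three-set partitions I must deduce that $\mathcal{F}$ is closed under finite intersections. So suppose $S,T\in\mathcal{F}$ but $S\cap T\notin\mathcal{F}$; I want to build a non-measured partition into three parts. The natural candidate is $X_1:=S\cap T$, $X_2:=S\setminus T$, $X_3:=X\setminus S$ (so $X_2\cup X_3=X\setminus T$, the complement of $T$). Since $\mathcal{F}$ is self-Aso and $T\in\mathcal{F}$, we have $X\setminus T\notin\mathcal{F}$, hence neither $X_2$ nor $X_3$ lies in $\mathcal{F}$ (each is contained in $X\setminus T$, and $\mathcal{F}$ being eventual, a subset of a non-member... wait — eventuality gives the wrong direction). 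Here is the one subtle point to watch: eventuality says supersets of members are members, so to conclude $X_2\notin\mathcal{F}$ from $X_2\subseteq X\setminus T\notin\mathcal{F}$ I instead argue: if $X_2\in\mathcal{F}$ then its superset $X\setminus T$ would be in $\mathcal{F}$, contradiction; similarly $X_3\notin\mathcal{F}$. And $X_1=S\cap T\notin\mathcal{F}$ by assumption. So all three parts are outside $\mathcal{F}$: the partition $X=X_1\cup X_2\cup X_3$ is non-measured, contradicting (\ref{3}). Hence $\mathcal{F}$ is a filter; and being self-Aso it decides every set against its complement, so it is an ultrafilter.

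I anticipate the main obstacle to be purely a matter of bookkeeping the direction of the eventuality hypothesis, as flagged above: one must phrase the `small sets are not in $\mathcal{F}$' steps as `if it were in $\mathcal{F}$, its superset would be too'. A secondary point worth a sentence is the degenerate case $S\cap T=\emptyset$, i.e.\ $X_1=\emptyset$: then the `partition' has an empty block, but $\emptyset\notin\mathcal{F}$ anyway (since $X\in\mathcal{F}$ and $\mathcal{F}$ is self-Aso, so $\emptyset=X^c\notin\mathcal{F}$), and one may either allow empty blocks in Definition \ref{def:mea} or instead take the genuine two-set partition $X=(X\setminus T)\cup T$... which is measured, so that doesn't help — better to simply note that a three-set partition is allowed to have one empty part, or merge $X_1$ into $X_2$ to get the two-set partition $X=(S\setminus T)\cup(X\setminus S)$ only in the disjoint case $S\cap T=\emptyset$, where $S\setminus T=S\in\mathcal{F}$ — again measured. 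So the cleanest route is to permit (possibly empty) blocks in the three-set partition; with that convention the argument above goes through verbatim. The `thus an ultrafilter' parenthetical in (\ref{4}) then follows because a self-Aso filter is an ultrafilter: it contains exactly one of $S,S^c$ for every $S$, which is the defining property.
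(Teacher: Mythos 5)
Your cycle (\ref{1})$\Rightarrow$(\ref{2})$\Rightarrow$(\ref{3})$\Rightarrow$(\ref{4})$\Rightarrow$(\ref{1}) is sound and, for the key step (\ref{3})$\Rightarrow$(\ref{4}), uses exactly the same three-part partition as the paper ($S\cap T$, $S\setminus T$, $S^c$); the paper additionally proves (\ref{3})$\Rightarrow$(\ref{1}) and (\ref{3})$\Rightarrow$(\ref{2}) separately, which your cycle makes redundant. One justification in your argument is wrong, though the conclusion it serves is trivially recoverable: $X_2\cup X_3=(S\setminus T)\cup(X\setminus S)$ is not $X\setminus T$ but $(S\cap T)^c$, and in particular $X_3=S^c$ is not in general contained in $X\setminus T$ (any element of $S^c\cap T$ lies in $X_3$ but not in $T^c$), so you cannot deduce $X_3\notin\mathcal{F}$ by the superset argument from $T^c\notin\mathcal{F}$. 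The repair is immediate: $S\in\mathcal{F}$ and self-Aso give $X_3=S^c\notin\mathcal{F}$ directly, while your superset argument does correctly apply to $X_2=S\setminus T\subseteq T^c$. With that correction all three parts lie outside $\mathcal{F}$ and the contradiction with (\ref{3}) stands. Your handling of the degenerate case $S\cap T=\emptyset$ (allowing an empty block) agrees with the paper's own convention, since its proof of (\ref{3})$\Rightarrow$(\ref{2}) explicitly pads a two-part partition with $\emptyset$; and your arguments for (\ref{1})$\Rightarrow$(\ref{2}), (\ref{2})$\Rightarrow$(\ref{3}) and (\ref{4})$\Rightarrow$(\ref{1}) match the paper's.
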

\begin{proof}
That \ref{1}$\Rightarrow$\ref{2} we have noted above: The existence of a non-measured partition, with parts of `measure' $0$ and their (finite) union $X$ of `measure' $1$, clearly violates finite additivity.

\ref{3}$\Rightarrow$\ref{1}: Let $S$ and $T$ be disjoint. Then we have the three-part partition $S,T,(S\cup T)^c=S^c\cap T^c$. The partition is measured, therefore one of the three parts have `measure' $1$ and the others
`measure' $0$. As in Definition \ref{def:mea} this determines the values for all partial unions of the parts and means that `finite additivity' holds whenever these are concerned, in particular for $S$, $T$ and their union.

\ref{2}$\Rightarrow$\ref{3} is obvious.

\ref{3}$\Rightarrow$\ref{2}: Firstly, to a non-measured partition with $2$ parts, just add $\emptyset$ as a part.

We show that if there is a non-measured partition into $K>3$ parts, one can find one with $<K$ parts. Indeed, unite two parts $S$ and $S'$ to get a partitions into $K-1$ parts. If that union of two parts is not in $\mathcal{F}$, the new partition is non-measured with $K-1$ parts. If it is in $\mathcal{F}$, its complement, i.e.\ the union of all parts other than $S$ and $S'$, is not in $\mathcal{F}$, so the latter complement, together with $S$ and $S'$, make a non-measured with $3$ parts.

\ref{3}$\Rightarrow$\ref{4}. For $\mathcal{F}$ to be a filter, it must contain $S\cap T$ if $S,T\in\mathcal{F}$. Now, we have a partition of $X$ into three sets, $S\cap T, S^c, S\cap T^c$. of these, the union of $S\cap T$ and $S^c$ includes $T$, so is in $\mathcal{F}$. And the union of $S\cap T$ and $S\cap T^c$ includes $S$, so is in $\mathcal{F}$ too. Their respective complements $S\cap T^c$ and $S^c$ thus cannot be in $\mathcal{F}$. So, for the partition to be measured $S\cap T$ must be in $\mathcal{F}$.

\ref{4}$\Rightarrow$\ref{1}: Assume $\mathcal{F}$ is a filter. For finite additivity we need that if $S$ and $T$ are disjoint, the `measure' of $S\cup T$ is the sum of the `measures' of $S$ and $T$.

As we saw in (\ref{it}) above, if one `measure' is $0$ and the other is $1$ thing are always OK.

Also, a filter being inner, the measures cannot be both $1$ -- see (\ref{it:inn}) and (\ref{it:flt}) above.

If these both `measures' are $0$, i.e.\ both $S$ and $T$ are not in $\mathcal{F}$, then $\mathcal{F}$ includes their complements $S^c$ and $T^c$, hence, being a filter, includes $S^c\cap T^c=(S\cup T)^c$ (De Morgan's laws), so $S\cup T$ is not in $\mathcal{F}$ -- has `measure' $0$.
\end{proof}

\subsection{Ultrafilters vs.\ Inner; Outer; Self-Aso Eventual Families}\label{sbs:u}
So, we have inner eventual families, including the filters and self-Aso, which include the self-Aso filters, equivalently those furnishing a `finitely additive measure'

The latter may be characterized as `self-Aso filters' -- filters to which a set belongs iff its complement does'nt.  Alternatively as \textit{maximal filters} -- the lack of a set $S$ witn both $S$ and $S^c$ not in $\mathcal{U}$ is exactly what hampers extending to a larger filter. These are the ultrafilters.

As mentioned in the end of \S\ref{sbs:u}, self-Aso eventual families share part of the properties of ultrafilters:

They both `extract' a $(0,1)$ value -- yes/no if you wish -- from such value at each member of a set, and consequently   (see item \ref{it:lim} below) may `extract' a numerical value -- a limit, from numerical values at each member (i.e.\ a numerical function).

Self-Aso eventual families, like ultrafilters, would respect inclusion/implication and complement/negation, but not the binary Boolean `and' and `or'. Thus they would not do for most of the roles ultrafilters have in Logic, etc., but maybe would still provide a generalization for some.

Note also that

\begin{enumerate}
\item In a finite set every ultrafilter is `fixed' at some element $x$ -- is the family of subsets containing $x$. Thus to a finite set they will not add any `limit points' in the sense of \S\ref{sbs:u}. As our `majority' example \ref{ex:odd} shows, that is certainly not the case with self-Aso.

\item \label{it:frgt} With a self-Aso family $\mathcal{F}$, in particular an ultrafilter, by the way things (limits, integrals) are defined, if $Q$ is a subset which belongs to $\mathcal{F}$, equivalently $Q^c$ does belongs there, then any change in $Q^c$ \textbf{does not matter at all} -- indeed it is a set of measure $0$ here -- and one may view $\mathcal{F}$ as a self-Aso family in $Q$, `and forget about $Q^c$'. Conversely, given a self-Aso family $\mathcal{F'}$ on a $Q\subset X$ one `equates' it with the family on $X$\,\,$\mathcal{F}:=\{S\subset X\,|\,S\cap Q\in\mathcal{F'}\}.$

\item Yet inner eventual families (including the self-Aso) do not share with filters the fact that \textit{In a (Hausdorff) compact space, they must have some limit points, and a unique limit point must be a limit} - for that one need that every finite partition be measured.

    \item \label{it:lim} In any (Hausdorff) \textit{compact} space an ultrafilter $\mathcal{U}$ has a unique limit. This limit may be viewed also as an \textit{integral}. The limit is an $x_0\in X$ all whose neighborhoods belong to $\mathcal{U}$. Thus their complements are not in $\mathcal{U}$, so `can be completely forgotten' according to item \ref{it:frgt}. It is as if `everything is' in any neighborhood of $x_0$, which, of course, can be taken as small as we wish).

    This hallmark ultrafilters, seems to be largely shared by self-Aso families. (Just, contrary to ultrafilters, one may not have a limit).
\end{enumerate}

\subsection{Constructing new Self-Aso from Old}
Similarly to ultrafilters, one may construct new self-Aso from old (by sort of `concatenating' the, if you wish, `extracting a $(0,1)$ value -- yes/no, from a family of such'). When one of the ingredients is self-Aso but not an ultrafilter (as in example \ref{ex:odd}) one would get many examples of such.

Of course, any function $X\to Y$ would \textbf{push} ultrafilters (resp.\ self-Aso) in $X$ to such in $Y$.

Another example is a \textbf{product}: Let $X$ and $Y$ be sets, $\mathcal{E}$ and $\mathcal{F}$ self-Aso eventual families  on (the subsets of) $X$ and $Y$ respectively.

One may `straightforwardly' define such on the Cartesian product $X\times Y$ as follows:

To check whether a subset $S$ of $X\times Y$ (i.e.\ -- its characteristic function -- a $(0,1)$-valued function on $X\times Y$) belongs, one just `integrates' on the $X$-slices using $\mathcal{E}$ -- `extracts' a $(0,1)$-value for each slice, and then `integrates' these on $Y$ using $\mathcal{F}$.

The problem is, that as with ultrafilters, \textbf{the order $Y$ after $X$ or $X$ after $Y$ matters very much} -- no analog of Fubini's theorem.

While, as said, one does get many examples, That `asymmetry' with product hampers `fully' using ultrafilters, let alone self-Aso to `extend sets by limits'. One needs to extend `non-canonically' to get what is called Non-standard Analysis (see references in the bibliography, say my \cite{Levy}).

One tries to modify that product scenario:

Let $X$ be a set, let us be given a self-Aso eventual family $\mathcal{F}$ on $X^{\{1,2,\ldots,2N+1\}}$ -- the $2N+1$-tuples in $X$ (or, see item \ref{it:frgt} above, -- on some subset $Q$ therein), then one `projects' to a self-Aso eventual family in $X$ by `deciding whether a subset $S\subset X$ belongs' by considering \textit{the set of $2N+1$-tuples with `majority' lying in $S$}, and seeing whether that belongs to $\mathcal{F}$.

(At the end of example \ref{ex:odd} we, in fact, did that with $\mathcal{F}$ a fixed ultrafilter)

\textbf{Acknowledgment}. This work is an offshoot of a collaboration with Prof.\ Yair Censor of the
University of Haifa (see \cite{Censor-Levy}, \cite{Censor-Levy-2019}). I am very much indebted to Professor Censor for raising the questions and many helpful discussions and advice.

\end{document}